\providecommand{\U}[1]{\protect\rule{.1in}{.1in}}
\def\lowerbar{\underaccent{\bar}}
\DeclareMathOperator* \argmin {arg \, min}
\numberwithin{equation}{section}
\newtheorem{theorem}{Theorem}
\newtheorem{lem}{Lemma}
\newtheorem{cor}{Corollary}
\newtheorem*{cors}{Corollary}%
\theoremstyle{remark}
\newtheorem{rem}{Remark}
\newtheorem{example}{Example}
\journal{Journal of Economic Theory}
\begin{document}

\begin{frontmatter}

\title{Stochastic Stability of Monotone Economies in Regenerative Environments}

\author[sf]{Sergey Foss}
\ead{s.foss@hw.ac.uk}
\address[sf]{Maxwell Institute and Heriot-Watt University,
Edinburgh and Sobolev Institute of Mathematics and Novosibirsk State University.}

\author[vs]{Vsevolod Shneer}
\ead{v.shneer@hw.ac.uk}
\address[vs]{Maxwell Institute and Heriot-Watt University.}

\author[jt]{Jonathan P.\ Thomas}
\ead{jonathan.thomas@ed.ac.uk}
\address[jt]{School of Economics, University of Edinburgh.}

\author[tsw]{Tim Worrall\corref{cor1}}
\ead{tim.worrall@ed.ac.uk}
\address[tsw]{School of Economics, University of Edinburgh.}
\cortext[cor1]{Corresponding author.}

\begin{abstract}
We introduce and analyze a new class of monotone stochastic recursions in a
regenerative environment which is essentially broader than that of Markov
chains. We prove stability theorems and apply our results {to three canonical
models in recursive economics}, generalizing some known stability results to
the cases when driving sequences are not independent and identically
distributed.
\end{abstract}

\begin{keyword}
Monotone Economy \sep Markov Chain \sep Stochastic Recursion \sep
Driving Sequence \sep
Regenerative Sequence \sep Existence and Uniqueness of a Stationary
Distribution \sep Stochastic Stability \sep One Sector Stochastic Growth Model \sep Bewley-Imrohoroglu-Huggett-Aiyagari Model \sep Risk-Sharing Model

\JEL C61 \sep C62
\end{keyword}

\end{frontmatter}

\section{Introduction}

\label{section1}

{This paper develops results on stochastic stability, in particular, uniform convergence to a unique stationary distribution, for a class of monotone stochastic recursions where the exogenous stochastic driving process is regenerative. A regenerative stochastic process is, loosely, a process that has independent and identically distributed (i.i.d.)\ cycles. We apply our results to three important workhorse models in macroeconomics. The Bewley-Imrohoroglu-Huggett-Aiyagari precautionary savings model
of \citet{Bewley86}, \citet{Imrohoroglu92}, \citet{Huggett93} and
\citet{Aiyagari94}, the one-sector stochastic optimal growth model of
\citet{BrockMirman72}, and the risk-sharing under limited commitment model of
\citet{Kocherlakota96}. In each of these examples, we are able to demonstrate uniqueness and stability
results under less restrictive assumptions than in existing literature.}

{To illustrate the applicability of our approach, consider a typical problem in economic dynamics that can be solved recursively using a  Bellman equation of the form
\begin{equation}
V(x, z) = \sup_{x^\prime \in \Gamma(x,z)} u(x,z, x^\prime) + \beta \int V(x^\prime, z^\prime) Q(z, dz^\prime). \label{eq:be}
\end{equation}
In this equation $x$ is an endogenous state variable, $z$ is an exogenous shock, $u$ is the per-period payoff function, $\Gamma$ is the constraint set, $Q$ is the transition function for the shock and $V$ is the value function. Variables indicated by a $^\prime$ are the next period values. Stochastic  dynamic  programming problems of this type are discussed extensively in \citet{Stokey-Lucas89}.\footnote{This formulation of a control problem is sometimes described as being of the Euler class \citep{Miao14}.} When there is a unique solution to the Bellman equation, it can be described by a policy function $x^\prime = f(x, z)$.}

{The policy function from dynamic problems of the type described in~\eqref{eq:be} are examples of a
\emph{stochastic recursive sequence (SRS)\/}, or \emph{stochastic recursion\/}
of the form
\begin{equation}
X_{t+1}=f(X_{t},Z_{t})\quad\mbox{a.s.}, \label{eq:srs}%
\end{equation}
where $\{Z_{t}\}$ is a stochastic process with $Z_{t}\in\mathcal{Z}$,
$X\in\mathcal{X}$ is the state variable of economic interest and
$f\mathpunct{:}{\cal X}\times\mathcal{Z}\rightarrow\mathcal{X}$ is an
appropriately measurable function. The process $\{Z_{t}\}$ is known as the
\emph{driving sequence\/} of the stochastic recursion. For a given $X_{0}$ and
given (random) values of $Z_{0},\ldots,Z_{t-1}$, the system~\eqref{eq:srs}
generates a (random) value of $X_{t}$.}

{It is well-known that a stochastic recursive sequence is more general than a Markov chain \citep[see, e.g.,][]{BoFo}.\footnote{We follow the terminology of \citet{MTw} and use the term Markov Chain to refer to any discrete-time Markov process (DTMP) whether the state space is finite, countable or continuous.} In particular, under extremely general
conditions on the state space $\mathcal{X}$
(see Section 2.1 for details), any time-homogeneous Markov chain
 (equivalently, discrete-time Markov process, DTMP)
may be represented as an SRS~\eqref{eq:srs} with independent and identically
distributed (i.i.d.)\ driving elements $Z_{0},Z_{1},\ldots$, whereas, the stochastic recursion allows $\{Z_n\}$ to be dependent, for example, it could itself be a Markov chain.}

{\citet{Stachurski} gives a number of examples of stochastic recursions in economics including threshold models and random mutations to best responses in a co-ordination game. Other examples include linear models, such as $X_{t+1}=a_t X_t + b_t$ where $Z_t=(a_t, b_t)$ is a random vector \citep{Horst}. The focus of our applications will however, be on recursions generated from dynamic programming problems of the type in equation~\eqref{eq:be}.}

{The question we address in this paper is whether there exist a unique stationary distribution for $X$ when the driving process is regenerative. The answer to this question depends} on the spaces $\mathcal{X}$ and
$\mathcal{Z}$, the function $f$ and the nature of the driving sequence. In
this paper we are concerned with the case where the function $f$ is
monotone \emph{increasing\/} in $X$ and where $Z$ is a \emph{regenerative
process\/}. We make appropriate assumptions on $\mathcal{X}$ and $\mathcal{Z}$
that are specified below. Loosely, a stochastic process is regenerative if it
can be split into independent and identically distributed (i.i.d.)\ cycles.
That is, if there exists a subsequence of (random) dates such that the process
has the same probabilistic behavior between any two consecutive dates in the
subsequence. The cycle lengths (lengths of time intervals between these dates)
may also be random, in general, with the only requirement that they have a
finite mean value. As an example, consider a finite-state time-homogeneous
Markov chain with a single closed class of communicating states. If the chain
starts in some state~$z_{0}$, then the subsequence of dates corresponds to the
dates at which the chain revisits state~$z_{0}$. Between each of these dates
the chain has the same probabilistic behavior.\footnote{An i.i.d.\ process is
one that is regenerative at every date.} The class of regenerative processes
is large and includes not only ergodic Markov chains, but also renewal
processes, Brownian motion, waiting times in general queues and so
on.\footnote{We are not the first to consider regenerative processes in the
economics literature. For example, \citet{Kamihigashi15} consider perfect
simulation of a stochastic recursion of the form $X_{t+1}=f(X_{t},\xi
_{t})\mathbb{1}\{X_{t}\geq x\}+\epsilon_{t}\mathbb{1}\{X_{t}<x\}$ where
$\mathcal{X}=[a,b]$, $x\in(a,b)$, $f$ is increasing in $X$ and $\{\xi_{t}\}$
and $\{\epsilon_{t}\}$ are i.i.d. The process regenerates for values $X_{t}%
<x$. This process arises in models of industry dynamics with entry and exit
\citep[see][]{Hopenhayn92}. It is a Markov process, but it is not monotone
unless the distribution of $f(x,\xi)$ stochastically dominates the
distribution of $\epsilon$.}

Before explaining our approach in more detail, we outline three traditional
approaches that are used to address stability and uniqueness issues for SRS of
the type described by equation~\eqref{eq:srs}.
First, when $\{Z_{t}\}$ is i.i.d., the process for $X_{t}$ is Markov and
standard existence and convergence results for discrete-time Markov processes can be
applied. For example, when $f$ is monotone in the first argument, it is
well-known that there is convergence {to a unique invariant distribution} if a
mixing or splitting condition holds \citep[see, e.g.,][]{Dubins, BhMa,
Stokey-Lucas89, HopenhaynPrescott92}.\footnote{\citet{Stokey-Lucas89} use the
Feller property, which is a continuity requirement, together with monotonicity
and a mixing condition to derive the results. \citet{HopenhaynPrescott92}
develop an existence result using monotonicity alone, and combined with a
mixing condition, establish that uniqueness and stability follow.}

Second, stability results are also known in a more general setting where the
driving sequence $\{Z_{t}\}$ is stationary or even asymptotically stationary
(this literature originated with \citet{Loynes}, see, e.g., \citet{BoFo} and
references therein). By \emph{stationarity\/} we mean \emph{stationarity in
the strong sense\/}, that is, for any finite $k$, the distribution of a
finite-dimensional vector $(Z_{t},\ldots,Z_{t+k})$ does not depend on $t$. The
most basic result is that if the state space for the $X$'s is partially
ordered and possesses a least element, say 0, and if SRS $X_{t+1}%
=f(X_{t},Z_{t})$ starts from the bottom point $X_{0}=0$, with $f$ monotone
increasing in the first argument, then the distribution of $X_{t}$ is monotone
increasing in $t$ and, given that the sequence is \emph{tight\/}%
,\footnote{Tightness in this {context} means that for any $\varepsilon>0$
there exists $K_{\varepsilon}$ such that ${\mathbf{P}}(X_{t}\geq
K_{\varepsilon})\leq\varepsilon$ for all $t$.} it converges to a limit which
is the \emph{minimal\/} stationary solution to recursion (1.1). In general,
there may be many solutions, and for the minimal solution to be unique, one
has to require additional assumptions, such as, the existence of \emph{renovating events\/} \citep[for details
see, e.g.,][]{Foss83, Brandt85}. These results seem to have been relatively
little used in the economics literature although in \citet{Bewley86} it is
assumed that there is a Markov driving sequence for shocks that starts from a
stationary state.

A third situation where results are known is considered by
\citet[][Chapter~9]{Stokey-Lucas89} and \citet{HopenhaynPrescott92}. If
$\{Z_{t}\}$ is itself a Markov chain, or equivalently an SRS of the form
$Z_{t}=g(Z_{t-1},\varepsilon_{t-1})$ with i.i.d.\ $\{\varepsilon_{t}\}$, then
$Y_{t}=(X_{t},Z_{t})$ is a time-homogeneous Markov chain, equivalently, an SRS
of the form $Y_{t+1}=F(Y_{t},\varepsilon_{t})\colonequals(f(X_{t}%
,Z_{t}),g(Z_{t},\varepsilon_{t}))$. Then, provided a mixing condition is
satisfied, one can use the monotone convergence approach to establish
convergence of the extended Markov chain $Y_{t}$. This is the approach
generally used in the economics literature. There are however, three
disadvantages to this approach. First, in order to apply monotone convergence
results, it is required that function $g$ is increasing in the first argument.
That is, it is required that the driving process is itself monotone
(positively correlated). Whilst this may be natural in many economic contexts,
it may be restrictive in others.\footnote{For example, if the states that the
driving process represents have no natural ordering, there may be no
reordering of states such that the process is monotone. We give further
examples below.} Second, to apply monotone convergence results, it is required
that function $f$ is monotone (increasing) in both arguments, not just the
first argument. This can be problematic in situations where the SRS is derived
as a policy function of a dynamic programming problem. In this case,
establishing monotonicity in the second argument may require extra
restrictions on preferences and technology. This is the case in the one sector
stochastic optimal growth model with correlated shocks that is studied by
\citet{DonaldsonMehra} and others; see section 3.2. Third, the fact that the
state space for the extended state variable, $\mathcal{X}\times\mathcal{Z}$,
is of a larger dimension, may create additional technical difficulties and
establishing that the mixing condition is satisfied may become less
straightforward.

In this paper we exploit the i.i.d.\ cycle property of regenerative processes.
We use this property to construct a Markov process defined at the regeneration
times driven by an i.i.d.\ random variable. Together with an analogue of the
monotone mixing or \emph{splitting condition\/} of
\citet[][condition~(1.2)]{BhMa2} this can be used to establish convergence to
a unique stationary distribution.

We develop our approach in a simple scenario with a compact and completely
ordered state space $\mathcal{X}$ (which may be taken to be $[a,b],$ $a,b\in%
\mathbb{R}
$, $a<b,$ without loss of generality). In the case where the driving sequence
is i.i.d., the splitting condition says that (we focus here on the
i.i.d.\ case for simplicity of notation and explanations), for some
$c\in\lbrack a,b]$, there is a finite time $N$ such that for the Markov chain
$X_{t}^{(b)}$ that starts from the maximal state $X_{0}^{(b)}=b$ at time zero
(with any $Z_{0}$), the probability ${\mathbf{P}}(X_{N}^{(b)}\leq c)>0$ and,
second, for the Markov chain $X_{t}^{(a)}$ that starts from the minimal state
$X_{0}^{(a)}=a$ at time zero (with any $Z_{0}$), the probability ${\mathbf{P}%
}(X_{N}^{(a)}\geq c)>0$. In Section~2.1 we reproduce a result of \citet{BhMa}
for the case where $f$ is monotone increasing, the driving sequence is
i.i.d.\ and the splitting condition holds (Theorem~1) that shows there is
exponentially fast convergence to a unique stationary distribution. Theorem~2
in Section 2.2 extends this result to allow for a regenerative driving
sequence. A corollary to this theorem (Corollary~1) is provided in Section~2.3
that considers the important special case where the driving sequence is itself
an aperiodic Markov chain with a positive atom. For such regenerative driving
sequences, our approach generalizes the standard result whilst avoiding the
disadvantages mentioned above. In particular, we establish convergence to a
unique stationary distribution without needing to assume the driving process
is itself monotone or that the function $f$ is increasing in the second
argument. In addition, our convergence applies directly to the state space of
interest, $\mathcal{X}$, {and can be extended to the joint distribution on the state} space $\mathcal{X}%
\times\mathcal{Z}$.



The paper is organized as follows. In Section~\ref{MM}, we describe the model
and provide our main results. First, we describe regenerative processes. Next,
we review the results of~\citet{BhMa} for an i.i.d.\ driving sequence. Then,
we present the main results showing that if a mixing condition similar to that
given in \citet{BhMa} are satisfied between the dates when the driving
sequence regenerates, then stability holds. {Section~\ref{sec:eg} presents the
three economic applications of our main result to an income fluctuation problem with savings (Section~\ref{subsec:huggett}), stochastic optimal growth (Section~\ref{subsec:growth}) and risk sharing with limited commitment (Section~\ref{subsec:risksharing}). The proofs of the main and other subsidiary proofs are put in the Appendix.}

\section{The Main Model}

\label{MM}

In this section, we {outline the} main properties of discrete-time
regenerative processes, provide our lead example of regeneration for
Markov chains, and introduce our main model, which is a stochastic recursive
sequence with a regenerative driver.

Let~$Z_{t},t=0,1,\ldots$ be a (one-sided) regenerative sequence on a general
measurable space~$(\mathcal{Z},\mathcal{B}_{\mathcal{Z}})$. The sequence is
\emph{regenerative\/} if there exists an increasing sequence of integer-valued
random variables (times)~$0=T_{-1}\leq T_{0}<T_{1}<T_{2}<\ldots$ such that,
for~$\tau_{n}=T_{n}-T_{n-1},n\geq0$, the vectors
\begin{equation}
\{\tau_{n},Z_{T_{n-1}},\ldots,Z_{T_{n}-1}\} \label{regene}%
\end{equation}
are independent for~$n\geq0$ and identically distributed for~$n\geq1$. A
random vector \eqref{regene} is called a \emph{cycle\/} with \emph{cycle
length\/} $\tau_{n}$ and with $\{Z_{T_{n-1}},\ldots,Z_{T_{n}-1}%
\}$ the sequence of ``shocks'' over the cycle starting at the regenerative
time $T_{n-1}$ and up to the period before the next regenerative
time, i.e., $T_{n}-1$.

Furthermore, we assume that
\begin{equation}
\mathbf{E}\tau_{1}<\infty. \label{eq:finitemean}%
\end{equation}
It is known \citep[see, e.g.,][]{Asmussen} that if, in addition, regenerative
times are \emph{aperiodic\/},
\begin{equation}
G.C.D.\{n:\ {\mathbf{P}}(\tau_{1}=n)>0\}=1, \label{eq:aperiodicity}%
\end{equation}
 then $Z_{t}$ has a unique stationary distribution,\footnote{Throughout we use the term unique stationary distribution and in our context this equivalent to a unique limiting distribution for any initial value of the process. Other terms used for stationary distribution are invariant and steady-state distribution.} say $\pi$, and
converges to it in the total variation norm:
\[
\sup_{B\in\mathcal{B_{Z}}}|{\mathbf{P}}(Z_{t}\in B)-\pi(B)|\rightarrow
0,\quad\mbox{a.s.}\quad t\rightarrow\infty.
\]

The main aim of the paper is to study the behavior of a recursive sequence
\begin{equation}
X_{t+1}=f\left(  X_{t},Z_{t}\right), \quad
t=0,1,\ldots ,  \label{eq:generalsetting}%
\end{equation}
that starts from $X_0=x\in{\cal X}$, assuming that
\begin{itemize}
\item {} the function $f$ is measurable and is \emph{monotone\/} in the first
argument, with respect to some
ordering;

\item {} sequence $\{Z_{t}\}$ is regenerative and satisfies conditions
\eqref{eq:finitemean}-\eqref{eq:aperiodicity}.\footnote{A minor and natural extension is to the case where the recursive
sequence is $X_{t+1}=f\left(  X_{t},\xi_{t}^{Z_{t}}\right)$, $f$  monotone in its first argument, in which
$\left\{  \xi_{t}^{z}\right\}_{z\in\mathcal{Z},-\infty<t<\infty}$ are a
family of mutually independent random variables. With the assumption that each
$z\in\mathcal{Z}$, $\left\{  \xi_{t}^{z}\right\}  _{t\geq1}$ are i.i.d.\ with a
common distribution, it can be shown that our main theorem holds for this more general driving process.}
\end{itemize}

\begin{example}\label{iidexample} {The simplest possible example of a regenerative process is when $\{Z_{t}\}$ is an i.i.d.\ process. In this case $T_n=n$ and $\tau_n=1$ for $n\geq1$. All cycles are of length one.}
\end{example}

\begin{example}
\label{markovexample0} {In many economic applications the driving process is modeled as a time-homogenous, irreducible and aperiodic Markov chain $\{Z_{t}\}$ taking values in a
finite state space $Z$. In this case we can pick any particular state $z_{0}$ and then every time the process returns to $z_{0}$, a new
sequence is formed from the states occurring until $z_{0}$ is
visited again. The regeneration times $T_{0}<T_{1}<T_{2}\ldots$ are the
hitting times of $z_{0}$. By the Markov property, these sequences
and their length are independent and identically distributed. Similarly, the hitting times are aperiodic and \eqref{eq:finitemean}-\eqref{eq:aperiodicity} are satisfied.}
\end{example}

\begin{example}
\label{markovexample} {Example~\ref{markovexample0} is easily generalized to a positive recurrent time-homogeneous Markov chain with a
general state space $(\mathcal{Z},\mathcal{B_{Z}})$ that has a positive atom. A Markov chain has a positive atom if there is a point
$z_{0}\in\mathcal{Z}$ such that, for any $z\in\mathcal{Z}$,
\[
T_{1}^{z}=\min\{t\ :Z_{t}=z_{0}\ |\ Z_{0}=z\}<\infty\quad\mbox{a.s.}
\]
and
\[
{\mathbf{E}}T_{1}^{z_{0}}<\infty.
\]
Again the regeneration times $T_{0}<T_{1}<T_{2}\ldots$ are the
hitting times of $z_{0}$. By the Markov property, these sequences
and their length are independent and identically distributed.
Provided these hitting times are additionally assumed to be aperiodic, then \eqref{eq:finitemean}-\eqref{eq:aperiodicity} are satisfied.}
\end{example}

{Most of the known results on stability for stochastic recursions are for the case where the driving process is i.i.d.\ or the driving process is Markov \emph{and\/} monotone increasing. Our extension is to provide similar stability results for any regenerative process including Markov processes that are not monotonic. The applications we consider in Sections~\ref{subsec:growth} and~\ref{subsec:risksharing} are with Markov driving processes as in Example~\ref{markovexample0} and the application considered in Section~\ref{subsec:huggett} is with a driving process defined on a general state space as in Example~\ref{markovexample}. Similarly, models where a potentially non-Markov process drives an
agent's environment, but it periodically returns to some initial state, can be
incorporated into our framework, as in the next example. }

\begin{example}
\label{ex:search}
A worker who has just entered the unemployment pool
at $t=0$ receives unemployment benefit $b$ until
successfully matched with a firm, thereafter receiving wages $w_{t}$ until a
separation occurs, whereupon the worker returns to the initial unemployment
state (i.e., as at date $0)$. Wages and the matching and separation
hazards evolve jointly according to a general stochastic process. Formally let
$\{E_{t},y_{t}\}$ represent the process where $E_{t}\in\left\{
0,1\right\}  $ represents employment status ($0$ for unemployed, $1$ for
employed) and $y_{t}$ is income at time~$t$ ($y_{t}=b$
when $E_{t}=0$), and $E_{0}=0$. Then, $\{E_{t},y_{t}%
\}$ is a regenerative process with regenerative times~$\{T_{j}%
\}$ given by each time the worker transitions from employment to
unemployment: $T_{-1}=T_{0}=0$, $T_{1} =\min\left\{
t>0:E_{t-1}=0,E_{t}=1\right\}  $, the first time the worker returns to
unemployment, and likewise for each 
$j=2,\ldots$, let %
\[
T_{j}=\min\{t>T_{j-1}:E_{t-1}=0,E_{t}=1\}.
\]
Then, provided the mean return time to the initial state is finite and
the return times are mutually independent and have an aperiodic distribution (e.g., if transition probabilities are positive
at each date), assumptions \eqref{eq:finitemean}-\eqref{eq:aperiodicity} are
satisfied.
\end{example}


{In the rest of this section, we first consider the standard case with an i.i.d.\ driving process. In Section~\ref{subsec:regen} we provide the result of our main theorem for a regenerative driving process. In Section~\ref{subsec:markov} we specialize our result to the case where the driving process is a Markov chain with a countable state space and a positive recurrent atom. Finally, in Section~\ref{subsec:discuss} we discuss our results in relation to some of the existing literature on monotone economies.}

\subsection{I.i.d.\ driving sequence}\label{subsec:iid}

We start with a particular case when~$Z_{t}\ $is i.i.d. We revisit some
results from \citet{BhMa} (see also \citet{Dubins}).

The relation between time-homogeneous Markov chains (with a general measurable
state space $(\mathcal{X},\mathcal{B}_{\mathcal{X}})$) and recursions
\eqref{eq:generalsetting} with i.i.d.\ drivers is well-understood
(see, e.g., \cite{Kifer, BoFo}): if the sigma-algebra $\mathcal{B}_{\mathcal{X}}$ is
countably generated, then a Markov chain may be represented as a stochastic
recursion \eqref{eq:generalsetting} with an i.i.d.\ driving sequence
$\{Z_{t}\}$. In particular, any real-valued or vector-valued time-homogeneous
Markov chain may be represented as a stochastic recursion \eqref{eq:generalsetting}.

In what follows, we restrict our attention to real-valued $X_{t}$ and,
moreover, assume that
\begin{equation}
\mbox{the state space}\quad\mathcal{X}\quad\mbox{is the closed interval}\quad
\lbrack a,b]. \label{eq:zero-one}%
\end{equation}
We define the \emph{uniform, or Kolmogorov distance\/} between probability distributions on
the real line as
\begin{equation}
d(F,G)=\sup_{x}|F(x)-G(x)|. \label{uni1}%
\end{equation}
Here $F(x)=F(-\infty,x]$ and $G(x)=G(-\infty,x]$ are the distribution
functions. Let $F(x-)=F(-\infty,x)$ and $G(x-)=G(-\infty,x)$.\footnote{Note that convergence in the uniform distance is weaker than convergence in the total variation norm.} Then,
by the right-continuity of distribution functions,
\begin{equation}
d(F,G)=\sup_{x}|F(x-)-G(x-)|\equiv\sup_{x}\max\left(
|F(x-)-G(x-)|,|F(x)-G(x)|\right)  . \label{uni2}%
\end{equation}

Next, we assume the function $f$ to be \emph{monotone increasing\/} in the
first argument: for each $z\in\mathcal{Z}$ and for each $a\leq x_{1}\leq
x_{2}\leq b$,
\[
f(x_{1},z)\leq f(x_{2},z).
\]
We write for short
\[
{\mathbf{P}}^{(x)}(\cdot)={\mathbf{P}}(\cdot\ |\ X_{0}=x).
\]
\begin{jet}
We also denote by $F_{t}^{(x)}$ the distribution function of the random
variable $X_{t}$ if $X_{0}=x$ (and more generally denote by $F_{t}^{(\mu_0)}$
the distribution function of $X_{t}$ if $X_{0}$ has distribution $\mu_0$). Our first Theorem reproduces a result of \citet{BhMa}.\footnote{An improved version of the proof of this result of \citet{BhMa} can be found in the arXiv version of this paper \citep{FSTW}.} It shows convergence of the process $X_t$ to a unique stationary distribution under a monotone mixing or splitting condition. Recall that a distribution, say $\pi$, is \emph{stationary\/} for a Markov chain $X_t$, $t=0,1,\ldots$ if taking the initial value $X_0$ with distribution $\pi$
implies that all $X_t$, $t\ge 1$ also have distribution $\pi$. Results of this type were originally obtained in \citet{Dubins} (under an
additional assumption of continuity of the mapping $f$).
\end{jet}
\begin{arxiv}
We also denote by $F_{t}^{(x)}$ the distribution function of the random
variable $X_{t}$ if $X_{0}=x$ (and more generally denote by $F_{t}^{(\mu_0)}$
the distribution function of $X_{t}$ if $X_{0}$ has distribution $\mu_0$). Our first Theorem reproduces a result of \citet{BhMa}.\footnote{An improved version of the proof of this result of \citet{BhMa} can be found in the Appendix.} It shows convergence of the process $X_t$ to a unique stationary distribution under a monotone mixing or splitting condition. Recall that a distribution, say $\pi$, is \emph{stationary\/} for a Markov chain $X_t$, $t=0,1,\ldots$ if taking the initial value $X_0$ with distribution $\pi$
implies that all $X_t$, $t\ge 1$ also have distribution $\pi$. Results of this type were originally obtained in \citet{Dubins} (under an
additional assumption of continuity of the mapping $f$).
\end{arxiv}
\begin{theorem}
\label{BhMa2} Assume that time-homogeneous Markov chain
$X_{t}$ is represented by the stochastic recursion \eqref{eq:generalsetting}
with i.i.d.\ driving sequence $\{Z_{t}\}$, where function $f:[a,b]\times
\mathcal{Z}\rightarrow\lbrack a,b]$ is monotone increasing in the first
argument.\newline Assume there exists a number $c\in\lbrack a,b]$ and integer
$N\geq1$ such that
\[
\varepsilon_{1}\colonequals{\mathbf P}^{(b)}(X_{N}\leq c)>0
\]
and
\[
\varepsilon_{2}\colonequals{\mathbf P}^{(a)}(X_{N}\geq c)>0.
\]
Then, there exists a distribution $\pi$ on $[a,b]$ such that,
for any initial distribution $\mu_0$,
\begin{equation}
\sup_{x}d(F_{t}^{(\mu_0)},\pi)\rightarrow0,\quad t\rightarrow\infty\label{conv1}%
\end{equation}
exponentially fast.
\newline Furthermore, $\pi$ is the unique stationary
distribution for the Markov chain $X_{t}$.
\end{theorem}


\begin{rem}
{Theorem~1 is easily generalized to a case where the set $\mathcal{S}$ has a
partial order, $\leq$, such that there exists a least element $s_{0}%
\in\mathcal{S}$ and greatest element $s_{1}\in\mathcal{S}$ and $f$ is monotone
increasing in the first argument (with respect to the partial order $\leq
$).\footnote{{In this case, the mixing condition requires that there exists an
$\varepsilon>0$, an integer $N\geq1$ and sets $\mathcal{C}_{u}\subset
\mathcal{S}$ and $\mathcal{C}_{l}\subset\mathcal{S}$ such that for every
element $s\in\mathcal{S}$, there either exists an element $c\in\mathcal{C}%
_{u}$ such that $s\geq c$, or there exists an element $c\in\mathcal{C}_{l}$
such that $s\leq c$; and for every $c\in\mathcal{C}_{u}$, ${\mathbf{P}%
}^{(s_{1})}(X_{N}\leq c)>\varepsilon$, and for every $c\in\mathcal{C}_{l}$,
${\mathbf{P}}^{(s_{0})}(X_{N}\geq c)>\varepsilon$.}}}
\end{rem}


\subsection{Regenerative driving process}\label{subsec:regen}

We now turn our attention to the general regenerative setting
\eqref{eq:generalsetting}, but continue to assume \eqref{eq:zero-one} to hold, {that is, that the state space $\cal{X}$ is a closed interval.\footnote{This is less restrictive than it may seem because even when the state space is unbounded, it may be possible to show that all states outside of the closed interval are transient and the state must end up in the closed interval.}}

{We generalize Theorem~\ref{BhMa2} to this setting. The way this is done is first to apply Theorem~\ref{BhMa2} to the regeneration times using the i.i.d.\ nature of the cycles between the regeneration times. This implies convergence to a distribution $\pi$ at the regeneration times. Next, convergence for all dates can be established using the fact that the probabilistic nature of all cycles after the first is the same and that each cycle will in the limit start from the same distribution $\pi$. This stationary distribution for $X_t$, say $\mu$ may, in general, differ from $\pi$ and we give a simple example below (Example~\ref{queueexample}) where they do differ.}

{To proceed with the first step we} introduce an auxiliary process $\widetilde{X}_{t}^{(\alpha)}$ that starts
from $\widetilde{X}_{0}^{(\alpha)}=\alpha$ at time $0$, and follows the
recursion
\[
\widetilde{X}_{t+1}^{(\alpha)}=f\left(  \widetilde{X}_{t}^{(\alpha)}%
,Z_{T_{0}+t}\right)  \quad\text{for all}\quad t\geq0.
\]
The auxiliary process $\widetilde{X}_{t}^{(\alpha)}$ coincides in distribution
with the process $X$ started at time $T_{0}$ (i.e., at the start of
the first full cycle) from the state $\alpha$, and assumptions \eqref{eq:top}
and \eqref{eq:bottom} below ensure the mixing (similar to that guaranteed by
conditions of Theorem \ref{BhMa2}) over a typical cycle (from $T_{0}$ to
$T_{1}$) of the regenerative process $Z$. More generally, we consider an
auxiliary process $\widetilde{X}_{t}^{(F)}$ that follows the recursion
\[
\widetilde{X}_{t+1}^{(F)}=f\left(  \widetilde{X}_{t}^{(F)},Z_{T_{0}+t}\right)
\quad\text{for all}\quad t\geq0
\]
and that starts from a random variable $\widetilde{X}_{0}^{(F)}$ that has
distribution $F$ (and which does not depend on random variables $\{Z_{T_{0}%
+t},t\geq0\}$. Denote by $f^{(k)}$ the $k$-th iteration of
function $f$, so $f^{(1)}=f$ and, for, $k\geq1$,
\[
f^{(k+1)}(x,u_{1},\ldots,u_{k+1})=f\left(  f^{(k)}(x,u_{1},\ldots
,u_{k}),u_{k+1}\right),
\]
and let $f^{(0)}$ be the identity function.

\begin{theorem}
\label{thm:general} Assume that recursive sequence $\{X_{t}\}$ is defined by
\eqref{eq:generalsetting} where the function $f$ is monotone increasing in the
first argument and the sequence $\{Z_{t}\}$ is regenerative with regenerative
times $\{T_{n}\}$ that satisfy conditions \eqref{eq:finitemean}-\eqref{eq:aperiodicity}.

Assume that there exists a $c\in[a,b]$ such that the following conditions hold:
\begin{equation}
\varepsilon_{1}\colonequals{\mathbf P}\left(  \widetilde{X}_{T_{1}-T_{0}%
}^{(b)}\leq c\right)  >0, \label{eq:top}%
\end{equation}
and
\begin{equation}
\varepsilon_{2}\colonequals{\mathbf P}\left(  \widetilde{X}_{T_{1}-T_{0}%
}^{(a)}\geq c\right)  >0. \label{eq:bottom}%
\end{equation}

Then there exists a distribution ${\pi}$ on $[a,b]$ such that
\begin{equation}
\rho_{t}\colonequals\sup_{x}d(G_{n}^{(x)},\pi)=\sup_{x}\sup_{r}|G_{n}%
^{(x)}(r)-{\pi}(-\infty,r]|\rightarrow0,\quad n\rightarrow\infty
\label{eq:conv}%
\end{equation}
exponentially fast. Here $G_{n}^{(x)}$ is the distribution of $X_{T_{n}}$ if
$X_{T_{0}}=x$.

Furthermore,
the distributions of $X_{t}$ converge in the uniform metric to distribution
\[
\mu(\cdot)=\frac{1}{{\mathbf{E}}(\tau_{1})}\sum_{l=0}^{\infty}{\mathbf{P}%
}\left(  \tau_{1}>l,f^{(l)}\left(  \widetilde{X}_{0}^{(\pi)},Z_{T_{0}}%
,\ldots,Z_{T_{0}+l-1}\right)  \in\cdot\right)
\]
for any initial value $X_0$.

The following also holds for the joint distributions of $(X_t,Z_t)$:
\begin{multline*}
\sup_r \sup_{A \in \mathcal{B_Z}}\biggl|\mathbf{P}\left(X_t \le r, Z_t \in A\right)
\\ - \frac{1}{{\mathbf{E}}(\tau_{1})}\sum_{l=0}^{\infty}{\mathbf{P}%
}\left(  \tau_{1}>l,f^{(l)}\left(  \widetilde{X}_{0}^{(\pi)},Z_{T_{0}}%
,\ldots,Z_{T_{0}+l-1}\right)  \le r, Z_{T_{0}+l-1} \in A \right)\biggr| \to 0
\end{multline*}
as $t \to \infty$, for any initial value $X_{0}$.
\end{theorem}

\begin{rem}
Note that, as in the Markovian case of Theorem~1, we do not require the
function $f$ to be continuous in the first argument.
\end{rem}

\begin{rem}
In general, we require only the first moment of $\tau_{1}$ to be finite, so
convergence in the regeneration theorem may be arbitrarily
slow, and
the same holds for convergence of the distribution $F_{t}$ of random variable
$X_{t}$ to $\mu$. However, if $\tau_{1}$ has finite $(1+r)$-th moment, then
$d(F_{t},\mu)$ decays no slower than $t^{-r}$; and if $\tau_{1}$ has finite
exponential moment, then the convergence is exponentially fast.
\end{rem}

\begin{rem}
The mixing conditions \textbf{\eqref{eq:top}-\eqref{eq:bottom}}
are required to apply over a single regenerative cycle. However this
is not restrictive as a new cycle can be defined for example to consist of
appropriate multiple occurrences of an original cycle.
\end{rem}

The following simple example illustrates an application of the theorem and computation of the limiting distribution. It also shows that the distributions $\pi$ and $\mu$ in Theorem 2
may, in general, be different.

\begin{example}\label{queueexample}
Consider a simple example, with only two states of environment
$\mathcal{V}=\{1,2\}$ and with four-state space $\mathcal{X}=\{0,1,2,3\}$
(i.e., $[a,b]$ $=$ $[0,3]$). Assume sequence $\{V_{t}\}$ to be regenerative,
with the typical cycle taking two values, $(2,1)$ and $(2,2,1)$, with equal
probabilities $1/2$, so the cycle length $\tau_{1}$ is either 2 or 3, with
mean ${\mathbf{E}}\tau_{1}=5/2.$ Let $\{\xi_t^1\}$ and $\{\xi_t^2\}$
be two mutually independent i.i.d.\ sequences with the following distributions:
 ${\mathbf{P}}(\xi_{t}^{1}=k)=1/4$ for $k=0,1,2,3$ and ${\mathbf{P}}%
(\xi_{t}^{2}=-1)={\mathbf{P}}(\xi_{t}^{2}=-2)=1/2$.
Now define the driving sequence $Z_t$ as $Z_t=\xi_t^{V_t}$. The stochastic recursion is given by
\[
X_{t+1}=\min(3,\max(0,X_{t}+Z_t)), \quad t=0,1,\ldots .
\]
It may be easily checked
that the SRS satisfies all the conditions of the previous theorem.

Introduce the embedded Markov chain $Y_{n}=X_{T_{n}}$, as in the proof of the
previous theorem. It is irreducible with transition probability matrix
$P=\{p_{i,j},0\leq i,j\leq3\}$ given by
\[
\setlength{\delimitershortfall}{0pt}P=%
\begin{pmatrix}
\tfrac{1}{4} & \tfrac{1}{4} & \tfrac{1}{4} & \tfrac{1}{4}\\[1ex]%
\tfrac{1}{4} & \tfrac{1}{4} & \tfrac{1}{4} & \tfrac{1}{4}\\[1ex]%
\tfrac{3}{16} & \tfrac{1}{4} & \tfrac{1}{4} & \tfrac{5}{16}\\[1ex]%
\tfrac{3}{32} & \tfrac{3}{16} & \tfrac{1}{4} & \tfrac{15}{32}%
\end{pmatrix}
.
\]
For example, here
\begin{align*}
p_{3,1}  &  ={\mathbf{P}}(\tau_{1}=2,\xi_{1}^{2}=-2,\xi_{2}^{1}=0)+{\mathbf{P}%
}(\tau_{1}=3,\xi_{1}^{2}=\xi_{2}^{2}=-1,\xi_{3}^{1}=0)\\
&  +{\mathbf{P}}(\tau_{1}=3,\xi_{1}^{2}+\xi_{2}^{2}=-3,\xi_{3}^{1}%
=1)+{\mathbf{P}}(\tau_{1}=3,\xi_{1}^{2}=\xi_{2}^{2}=-2,\xi_{3}^{1}=1)\\
&  =\frac{1}{16}+\frac{1}{32}+\frac{1}{16}+\frac{1}{32}=\frac{3}{16}.
\end{align*}
Then the distribution of $Y_{n}$ converges to $\pi=(\pi_{0},\pi_{1},\pi
_{2},\pi_{3})$ which may be found by solving $\pi P=\pi$ with $\sum\pi_{i}=1$.
So we get $\pi=(29/160,183/800,1/4,17/50)$. Furthermore, the limiting
distribution for $X_{t}$ is given by
\begin{align*}
\mu_{k}  &  =\frac{1}{{\mathbf{E}}\tau_{1}}({\mathbf{P}}(Y^{(0)}%
=k)+{\mathbf{P}}(\max(0,Y^{(0)}+\xi_{0}^{2})=k)\\
&  +{\mathbf{P}}(\max(0,Y^{(0)}+\xi_{0}^{2}+\xi_{1}^{2})=k,\tau_{1}=3)),
\end{align*}
for $k=0,1,2,3$, where $Y^{(0)}\sim\pi$. In particular, $\mu_{3}=2\pi_{3}/5$,
$\mu_{2}=\frac{2}{5}(\pi_{2}+\pi_{3}/2)$, and $\mu_{1}=\frac{2}{5}(\pi
_{1}+(\pi_{2}+\pi_{3})/2+\pi_{3}/8)=\frac{2}{5}(\pi_{1}+\pi_{2}/2+5\pi
_{3}/8).$
\end{example}

\subsection{The case where the governing sequence is Markov}\label{subsec:markov}

In the particular case where $\{Z_{t}\}$ is a Markov chain on a countable
state space, Theorem~\ref{thm:general} leads to the following corollary, which
is important for two of the examples considered in the next section.

\begin{cor}
\label{Cor1} Assume again that the recursive sequence ~$\{X_{t}\}$ is defined
by \eqref{eq:generalsetting}, and that the function $f$ is
monotone increasing in the first argument. Assume in addition that~$\{Z_{t}\}$
is an aperiodic Markov chain on a countable state space with a positive
recurrent atom at point~$z_{0}$.
Assume also
that there exists a number~$a\leq c\leq b$, positive integers~$N_{1}$ and
$N_{2}$ and sequences $z_{1,1},\ldots,z_{N_{1},1}$ and $z_{1,2},\ldots
,z_{N_{2},2}$ such that $z_{N_{1},1}=z_{N_{2},2}=z_{0}$ and, for $i=1,2$, the
following hold:
\[
p_{i}\colonequals{\mathbf P}(Z_{j}=z_{j,i},\ \mbox{for}\ \ j=1,\ldots
,N_{i}\ |\ Z_{0}=z_{0})>0
\]
and that
\[
\delta_{1}\colonequals\mathbf{P}(\widetilde{X}_{N_{1}}^{(b)}\leq
c\ |\ Z_{0}=z_{0},\ Z_{j}=z_{j,1},\ j=1,\ldots,N_{1})>0
\]
and
\[
\delta_{2}\colonequals\mathbf{P}(\widetilde{X}_{N_{2}}^{(a)}\geq
c\ |\ Z_{0}=z_{0},\ Z_{j}=z_{j,2},\ j=1,\ldots,N_{2})>0.
\]
Then the distribution of~$X_{t}$ converges in the uniform metric to a unique
stationary distribution.
\newline
{In addition, There exists a stationary sequence $(X^{t},Z^{t})$ such that, as
$t\rightarrow\infty$,
\[
\sup_{a\leq x\leq b}\sup_{B\in\mathcal{B_{Z}}}|{\mathbf{P}}(X_{t}\leq
x,Z_{t}\in B)-{\mathbf{P}}(X^{t}\leq x,Z^{t}\in B)|\rightarrow0.
\]
}
\end{cor}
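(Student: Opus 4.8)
The plan is to follow the two-step structure of the proof of Theorem~\ref{thm:general}: first establish convergence of the embedded chain $Y_n = X_{T_n}$ by verifying the splitting condition of Theorem~\ref{BhMa2}, and then lift that convergence to $\{X_t\}$ by the regenerative-coupling argument already used for Theorem~\ref{thm:general}. I would begin by fixing the regenerative structure. Since $\{Z_t\}$ is a positive recurrent aperiodic Markov chain with an atom at $z_0$, its successive visit times to $z_0$ supply regeneration times $T_{-1} \le T_0 < T_1 < \cdots$ exactly as in Example~\ref{markovexample}: positive recurrence gives $\mathbf{E}\tau_1 < \infty$, and aperiodicity of the chain at $z_0$ gives the G.C.D.\ condition \eqref{eq:aperiodicity}. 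The cycle of $\{Z_t\}$ is the excursion from $z_0$ back to $z_0$, and $Y_n = X_{T_n}$ is a monotone Markov chain on $[0,1]$, just as in the proof of Theorem~\ref{thm:general}.

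The core of the argument is to verify a splitting condition for $Y$ at a single common number of steps $N$ and a single constant $c$. The hypotheses provide two deterministic $Z$-paths from $z_0$ to $z_0$: path~$i$ is $z_0, z_{1,i}, \ldots, z_{N_i,i}=z_0$, which occurs with probability $p_i>0$ and spans some number $m_i$ of cycles (the number of visits of the path to $z_0$ at times $1,\ldots,N_i$). On the event that $\{Z_t\}$ follows path~$i$ from $z_0$, the embedded chain advances by $m_i$ steps and, on this event, $Y_{m_i}$ equals $\widetilde{X}^{(1)}_{N_i}$ or $\widetilde{X}^{(0)}_{N_i}$ according to the starting point; so $\delta_i>0$ says precisely that, conditioned on path~$i$, starting from $1$ (resp.\ $0$) one lands in $\{\le c\}$ after $m_1$ cycles (resp.\ in $\{\ge c\}$ after $m_2$ cycles) with positive conditional probability.

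To equalize the number of steps I would concatenate the two paths. For the upper estimate, consider the super-path ``path~2 followed by path~1'', which runs from $z_0$ to $z_0$, has positive probability $p_1 p_2$ by the Markov property, and spans $M \colonequals m_1 + m_2$ cycles. Starting from $Y_0 = 1$, after the path-2 segment the chain sits at some value $y \le 1$; by monotonicity of $f$ in its first argument, the (independent) path-1 drivers that push $1$ below $c$ also push any $y \le 1$ below $c$, so $\mathbf{P}^{(1)}(Y_M \le c) \ge p_1 p_2 \delta_1 > 0$. Symmetrically, the super-path ``path~1 followed by path~2'' gives $\mathbf{P}^{(0)}(Y_M \ge c) \ge p_1 p_2 \delta_2 > 0$, and both super-paths span the same number $M$ of cycles. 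Hence the splitting condition of Theorem~\ref{BhMa2} holds for $Y$ with $N = M$ and constant $c$, yielding convergence of the law of $Y_n = X_{T_n}$ to a unique stationary $\pi$, exponentially fast in the uniform metric.

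Finally I would lift this to $\{X_t\}$ itself. The second part of the proof of Theorem~\ref{thm:general} uses only that $Y_n \to \pi$ in the uniform metric together with the classical regeneration theorem for $\{Z_t\}$; neither ingredient depends on the number of steps appearing in the splitting condition. Replaying that argument verbatim gives convergence of the law of $X_t$ in the uniform metric $d$ to the distribution $\mu$, and convergence from every initial condition forces $\mu$ to be the unique stationary law. The step I expect to be the main obstacle is exactly the production of a common number of cycles $M$ for the two one-sided conditions: the paths have different lengths and may themselves revisit $z_0$, so one must pass to the embedded chain, count cycles rather than time steps, and invoke monotonicity to make each one-sided estimate uniform over the unknown intermediate state reached after the first segment of the concatenation.
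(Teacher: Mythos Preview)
Your argument is correct, and it takes a genuinely different route from the paper's own proof. The paper does \emph{not} work with the natural embedded chain and a larger $N$ in Theorem~\ref{BhMa2}. Instead, it modifies the regenerative structure itself: it lets $k_i$ be the number of visits of path~$i$ to $z_0$, sets $L=\mathrm{lcm}(k_1,k_2)$, introduces an independent Bernoulli$(1/2)$ variable $\alpha$, and declares the new cycle length to be $\widehat{T}_1 = \alpha T_1 + (1-\alpha) T_L$. It then checks that this randomized regenerative sequence still satisfies \eqref{eq:finitemean}--\eqref{eq:aperiodicity} and that conditions \eqref{eq:top}--\eqref{eq:bottom} of Theorem~\ref{thm:general} hold over a \emph{single} $\widehat{T}$-cycle, with $\varepsilon_i \ge \tfrac{1}{2}p_i\delta_i$. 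The monotonicity step you isolate (the unknown intermediate state after the first segment is dominated by $1$, resp.\ dominates $0$) is used implicitly in the paper as well, to handle the first $L-k_i$ natural cycles before path~$i$ is run.

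Your concatenation device is arguably cleaner: you avoid the external randomization and the least common multiple, and simply apply Theorem~\ref{BhMa2} to $Y_n$ with $N=m_1+m_2$, which that theorem explicitly allows. The paper's device, on the other hand, buys a literal reduction to Theorem~\ref{thm:general} as a black box, so the lifting from $Y_n$ to $X_t$ need not be re-argued. Your observation that the second half of the proof of Theorem~\ref{thm:general} depends only on the convergence $Y_n\Rightarrow\pi$ (and not on the particular $N$ used in the splitting condition) is exactly what makes your shortcut legitimate.
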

\begin{rem}
\label{genMark} For simplicity, we have assumed that the Markov chain in
Corollary~\ref{Cor1} is defined on a countable state space with a positive
recurrent atom. However, Corollary \ref{Cor1} can be extended to the case of a
driving Markov chain on a general state space provided a \textquotedblleft
Harris-type\textquotedblright\ condition is satisfied. Here we outline the
conditions required. Consider again a recursive sequence $\{X_{t}\}$ with the
function $f$ monotone increasing in the first argument. Assume that there
exists a measurable set $A$ in the state space $(\mathcal{Z},\mathcal{B_{Z}})$
that is \textit{positive recurrent}:
\[
T_{1}(z_{0})=\min\{t>0\ :\ Z_{t}^{(z_{0})}\in A\}<\infty\quad\mbox{a.s.,
for any}\quad z_{0}\in\mathcal{Z}%
\]
and
\[
\sup_{z_{0}\in A}{\mathbf{E}}T_{1}(z_{0})<\infty.
\]
Here $Z_{t}^{(z_0)}$ is a 
Markov chain with initial value $Z_{0}^{(z_{0})}=z_{0}$. Furthermore, assume
that there exist positive integers $N_{1}$ and $N_{2}$, a positive number
$p\leq1$ and a probability measure $\varphi$ on $A$ such that, for $i=1,2$ and
all $z_{0}\in A$,
\[
{\mathbf{P}}(Z_{N_{i}}^{(z_{0})}\in\cdot)\geq p\varphi(\cdot)
\]
and that there exists a number $a\leq c\leq b$ and positive numbers
$\delta_{1}$ and $\delta_{2}$ such that
\[
{\mathbf{P}}(\widetilde{X}_{N_{1}}^{(b)}\leq c\ |\ Z_{0}=z_{0},Z_{N_{1}}%
=z_{1})\geq\delta_{1}%
\]
and
\[
{\mathbf{P}}(\widetilde{X}_{N_{2}}^{(a)}\geq c\ |\ Z_{0}=z_{0},Z_{N_{2}}%
=z_{2})\geq\delta_{2},
\]
for $\varphi$-almost surely all $z_{0},z_{1},z_{2}\in A$. With all these
conditions and aperiodicity of the Markov chain, it can be shown that the
distribution of $X_{t}$ converges in the uniform metric to the unique
stationary distribution.
\end{rem}


\subsection{Discussion}\label{subsec:discuss}

\begin{jet}
In this section we discuss our assumption that the state space is a closed interval of the real line and the relation of our results to some of the existing literature.

We first note that our results can be extended to a state space that is partially ordered. For example, if the state space is $[a,b]\times [a,b]$ (with the natural partial ordering: $(x_1,x_2)\le (y_1,y_2)$ if and only if
$x_1\le y_1$ and $x_2\le y_2$), then our results apply with only minor and natural modifications.

The extension to the case where the state space is not compact is however, likely to be more complicated. In particular, in this case, the $\delta$ and $N$ in our Corollary~1, may depend on initial conditions. Both \citet{KaSta} and \citet{Sz} consider the Markov chain model of \citet{BhMa} (with i.i.d.\ driving sequence) and establish convergence and uniqueness for monotone economies. \citet{KaSta} introduce a \emph{strong reversing condition\/} that requires that if there are two mutually independent trajectories $X_{t}^{(y_{0})}$ and $X_{t}^{(x_{0})}$ for the pair of initial conditions $x_{0}<y_{0}$, then, there is an $N\geq1$ and $\delta>0$ such that ${\mathbf{P}}(X_{N}^{(y_{0})}\leq X_{N}^{(x_{0})})\geq\delta$. This is, of course, equivalent to the monotone mixing condition of Theorem~\ref{BhMa2} when the state space is $[a,b]$ but can be applied when the state space is non-compact as well.

\citet{Sz} considers a model with an ordered state space that has no maximal and/or minimal element. The author suggested a reasonable
``replacement'', say, for a maximal element (if one
does not exist) by a random \textquotedblleft top\textquotedblright\ point.
In our notation, this generalization may be viewed as follows.
Assume, say, the state space for the Markov chain is the positive
half-line $[0,\infty)$ where there is no
maximal element, and suppose that a Markov chain $X_{t}$ is defined by a
stochastic recursion $X_{t+1}=f(X_{t},\xi_{t})$ with i.i.d.\ $\{\xi_{t}\}$.
Assume that there exists a random measure $\mu$ on $[0,\infty)$ such that if
$X_{0}\sim\mu$ and if $X_{0}$ does not depend on $\xi_{0}$, then
$X_{1}=f(X_{0},\xi_{0})$ is \emph{stochastically smaller\/} than $X_{0}$ (that is, ${\mathbf{P}}(X_{1}\leq x)\geq{\mathbf{P}}(X_{0}\leq x)$, for all $x$). In this case, the distribution $\mu$ may play a role of a new random \textquotedblleft
top\textquotedblright\ point if, for example, the distribution of $\mu$ has an
unbounded support.
\end{jet}
\begin{arxiv}
In this section we discuss our assumption that the state space is a closed interval of the real line and the relation of our results to some of the existing literature.

We first note that our results can be extended to a state space that is partially ordered. For example, if the state space is $[a,b]\times [a,b]$ (with the natural partial ordering: $(x_1,x_2)\le (y_1,y_2)$ if and only if
$x_1\le y_1$ and $x_2\le y_2$), then our results apply with only minor and natural modifications.

The extension to the case where the state space is not compact is however, likely to be more complicated. In particular, in this case, the $\delta$ and $N$ in our Corollary~1, may depend on initial conditions. Both \citet{KaSta} and \citet{Sz} consider the Markov chain model of \citet{BhMa} (with i.i.d.\ driving sequence) and establish convergence and uniqueness for monotone economies. \citet{KaSta} introduce a \emph{strong reversing condition\/} that requires that if there are two mutually independent trajectories $X_{t}^{(y_{0})}$ and $X_{t}^{(x_{0})}$ for the pair of initial conditions $x_{0}<y_{0}$, then, there is an $N\geq1$ and $\delta>0$ such that ${\mathbf{P}}(X_{N}^{(y_{0})}\leq X_{N}^{(x_{0})})\geq\delta$. This is, of course, equivalent to the monotone mixing condition of Theorem~\ref{BhMa2} when the state space is $[a,b]$ because the mixing condition may be written as there is an
$N\geq1$ and $\delta>0$ such that ${\mathbf{P}}(X_{N}^{(b)}\leq X_{N}^{(a)})\geq\delta>0$ and by monotonicity the same holds for any other pair of initial conditions $a\leq x_{0}<y_{0}\leq b$, with the same $\delta$ and $N$.

\citet{Sz} considers a model with an ordered state space that has no maximal and/or minimal element. The author suggested a reasonable
``replacement'', say, for a top point (if one
does not exist) by a random \textquotedblleft top\textquotedblright\ point.
In our notation, this generalization may be viewed as follows.
Assume, say, the state space for the Markov chain is the positive
half-line $[0,\infty)$ where there is no
maximal element, and suppose that a Markov chain $X_{t}$ is defined by a
stochastic recursion $X_{t+1}=f(X_{t},\xi_{t})$ with i.i.d.\ $\{\xi_{t}\}$.
Assume that there exists a random measure $\mu$ on $[0,\infty)$ such that if
$X_{0}\sim\mu$ and if $X_{0}$ does not depend on $\xi_{0}$, then
$X_{1}=f(X_{0},\xi_{0})$ is \emph{stochastically smaller\/} than $X_{0}$ (that is, ${\mathbf{P}}(X_{1}\leq x)\geq{\mathbf{P}}(X_{0}\leq x)$, for all $x$). In this case, the distribution $\mu$ may play a role of a new random \textquotedblleft
top\textquotedblright\ point if, for example, the distribution of $\mu$ has an
unbounded support. For instance, if there exists another function, say $h$
such that $f(x,y)\leq h(x,y)$ for all $x,y$ and that a Markov chain
$Y_{t+1}=h(Y_{t},\xi_{t})$ admits a unique stationary distribution, say $\mu$.
If $\mu$ can be easily determined, then it may play the role of a random
\textquotedblleft top\textquotedblright\ point.

Here is a simple example. Assume that $X_{t}$ is a discrete-time
birth-and-death-process with immigration at $0$, i.e. a non-negative
integer-valued Markov chain, which is homogeneous in time and with transition
probabilities ${\mathbf{P}}(X_{1}=1\ |\ X_{0}=0)=1-{\mathbf{P}}(X_{1}%
=0\ |\ X_{0}=0)=p_{0}>0$ and, for $k=1,2,\ldots,$ let ${\mathbf{P}}%
(X_{1}=k+1\ |\ X_{0}=k)=1-{\mathbf{P}}(X_{1}=k-1\ |\ X_{0}=k)=p_{k}$.
Furthermore assume that the $p_{k}$ are non-decreasing in $k$ (this makes the
Markov chain monotone), that all are smaller than $1/2$ and, moreover, that
$\lim_{k\rightarrow\infty}p_{k}=p<1/2$. Consider a Markov chain $Y_{t}$ with
transition probabilities ${\mathbf{P}}(Y_{1}=k+1\ |\ Y_{0}=k))=p=1-{\mathbf{P}%
}(Y_{1}=\max(0,k-1))\ |\ Y_{0}=k)$. Then this Markov chain has a unique
stationary distribution $\mu$ (which is clearly geometric), and it gives a
random \textquotedblleft top\textquotedblright\ point.
\end{arxiv}

We believe that this approach may be extended further to stochastic recursive sequences with regenerative drivers, using the ideas from
 \citet{BoFo}, where a similar concept of a stationary majorant was developed and
studied, using the construction of a stationary top sequence $\{X_{t}\}$. See further \citep{FTw} and \citep{TwCor} (and the references therein) where
similar ideas have been developed in the context of
``perfect simulation from the past'', with the introduction
of an artificial random \textquotedblleft top\textquotedblright\ point.

{Finally, we mention the paper of \citet{Acemoglu-Jensen15} that considers comparative static properties in similar setting with large numbers of agents (similar to the application we consider in Section~\ref{subsec:huggett}) and a Markov driving process. Their focus, however, is on developing results for any equilibrium distribution and not in establishing uniqueness.}

\section{Applications \label{sec:eg}}

In this section we present three workhorse models. In the first we allow for
general driving process as in Example \ref{markovexample}. In the second and
third we assume that the driving process is a Markov chain and apply
Corollary~1. We are thus able to extend known stability results in these models.

\subsection{Bewley-Imrohoroglu-Huggett-Aiyagari Precautionary Savings
Model\label{subsec:huggett}}

The basic income fluctuation model in which many risk-averse agents
self-insure against idiosyncratic income shocks through borrowing and saving
using a risk-free asset, is designated by \citet{Heathcoteetal}
\textquotedblleft the standard incomplete markets model\textquotedblright\ and
is the workhorse model in quantitative macroeconomics. At its heart is the
stochastic savings models of \citet{Huggett93} with an exogenous borrowing
constraint, or close variants of this model.\footnote{\citet{Bewley86} and
\citet{Aiyagari94} vary the context but the individual savings problem is
similar. They each derive existence and convergence results under slightly
different assumptions. \citet{Bewley86} assumes that the endowment shocks are
stationary Markov, \citet{Huggett93} assumes positive serial correlation and
two states, and \citet{Aiyagari94} assumes that endowment shocks are i.i.d.
\citet{Imrohoroglu92} uses numerical methods with a two state persistent
income process as in \citet{Huggett93}.} As \citet{Heathcoteetal} observe,
there are \textquotedblleft few general results that apply to this class of
problems.\textquotedblright\ Existing published work in the standard model
requires either that income fluctuations are i.i.d., or that an individual's
income process satisfies \textquotedblleft persistence\textquotedblright: a
higher income today implies that income tomorrow is higher in the stochastic
dominance sense. That is, that the income process is monotone.
\citet{Huggett93} has a two-state process for income and uses the
\citet{HopenhaynPrescott92} approach to prove convergence of the asset
distribution to a unique invariant distribution.\footnote{\citet{Miao02}
extends Huggett's model from two states to many states.} In the case of two
income states, the assumption of persistence in the income process is probably
innocuous. However, it may be restrictive in other cases. Obvious examples of
non-monotone processes would include termination pay where a worker receives a
large one-off redundancy payment followed by a long spell of unemployment, or
health shocks where an insurance payout is received but future employment
prospects are diminished.\footnote{As another example, consider the case where
there are a group of entrepreneurs who have very high income. It may be
possible that these entrepreneurs have a higher chance to fall to very low
income levels than those on medium income levels. This is the situation
described by \citet{Kaymak2015} who use information from observed
distributions of income and wealth to construct a transition matrix for
income. The transition matrix they use does not satisfy monotonicity.} In what
follows we consider Huggett's model with a potentially uncountable number of
states and dispense with the assumption that the income process is monotone
(we maintain all his other assumptions). Applying our methodology, we show
convergence in the uniform metric to a unique invariant
distribution.\footnote{In the subsequent analysis, we follow Huggett and
assume that the gross interest rate, $R$, is fixed. This is an ingredient into finding the equilibrium rate at which assets are in zero net demand.}$^{,}$\footnote{In independent work, and in a
more general context, \citet[][Proposition~5]{Acikgoz16} shows using different
methods that if the income process is a finite (irreducible aperiodic) Markov
chain, and there exists a \textquotedblleft worst\textquotedblright\ positive
probability sequence of incomes which is dominated at each date by any other
positive probability sequence (e.g., if the lowest income state recurs with
positive probability), then there exists a unique stationary distribution.
\citet{Zhu13}, who considers an income fluctuation model with endogenous labor
supply, uses a related argument under the assumption that the finite Markov
chain has strictly positive transition probabilities; results also hold for
the case where $\beta R =1$.}

Agents maximize expected discounted utility
\[
\mathbf{E}\left[  \sum_{t=0}^{\infty}\beta^{t}u(c_{t})\right]  ,
\]
where $c_{t}\in\mathbb{R}_{+}$ is consumption at time $t$, $t=0,1,\ldots,$
$u(c)=c^{1-\gamma}/(1-\gamma)$, $\gamma>1$, subject to a budget constraint at
each date
\begin{equation}
c+R^{-1}x^{+}\leq x+e, \label{eqn:budget}%
\end{equation}
a borrowing constraint $x^{+}\geq\underaccent{\bar}{x}$, where $e$ is the
current endowment, $x$ is current assets, $x^{+}$ is assets next period, $c$
is consumption, $\beta\in(0,1)$ is the discount factor and $R^{-1}>\beta$ is
the price of next-period assets. The individual's endowment at time $t$,
$e_{t}$, is drawn from a set $E=[\underaccent{\bar}{e},\bar{e}]$, where
$\infty>\bar{e}>\underaccent{\bar}{e}>0$; $e_{t}$ is governed by an aperiodic
\emph{positive recurrent Markov chain with an atom, }with aperiodic
regenerative times, as in Example \ref{markovexample}, where we denote by
$Q:E\times$ $\mathcal{E\rightarrow\lbrack}1,0\mathcal{]}$ the (stationary)
transition function, with $\mathcal{E}$ the Borel sets of $E$, and we assume
that the Feller property is satisfied
\citep[see, e.g.,][ch.8]{Stokey-Lucas89}. The borrowing constraint satisfies
$\underaccent{\bar}{x}<0$ and
$\underaccent{\bar}{x}+\underaccent{\bar}{e}-\underaccent{\bar}{x}R^{-1}>0$.
The initial values $e_{0}\in E$ and $x_{0}\geq\underaccent{\bar}{x}$ are given.

The individual's decision problem can be represented by the functional
equation:
\begin{equation}
v(x,e)=\max_{(c,x^{+})\in\Gamma(x,e)}u(c)+\beta\mathbf{E}\left[  v_(x^{+},e^+)\mid e\right]  \label{eqn:problem}%
\end{equation}
where $\mathbf{E}$ is expectation over $e^{+}$ given $e$, $v(x,e)$ are the value
functions, and
\[
\Gamma(x,e)=\left\{  (c,x^{+})\mid c+R^{-1}x^{+}\leq x+e,x^{+}\geq
\underaccent{\bar}{x},c\geq0\right\}
\]
is the constraint set. The resulting policy functions are denoted $c=c(x,e)$
and $x^{+}=f(x,e)$ (i.e., an optimal policy must satisfy these a.s.).
\citet[][Theorem 1]{Huggett93} proves that there is a unique, bounded and
continuous solution to (\ref{eqn:problem}) and each $v(x,e)$ is increasing,
strictly concave and continuously differentiable in $x$, while $f$ is
continuous and nondecreasing in $x$, and (strictly) increasing whenever
$f(x,e)>\underaccent{\bar}{x}$. These results extend to our context with a
continuous state space; see \citet{Miao02}.

Huggett assumes monotonicity of the endowment process: with two endowment
states, $E=\left\{  \underaccent{\bar}{e},\bar{e}\right\}$, this means
$p(\underaccent{\bar}{e},\underaccent{\bar}{e})\geq p(\bar{e}%
,\underaccent{\bar}{e})$ where $p(e,e^{\prime})$ denotes the transition
probability. He shows that for a given $R$, there exists a unique stationary
probability measure for $x=(x,e)$ and that there is weak convergence to this
distribution for any initial distribution on $x$ (see \citet[][Theorem 2]{Huggett93}).

We can extend this result to our more general context (non-discrete state
space, no monotonicity assumption) using the following {(the proof can be
found in the Appendix)}.\footnote{A similar result is established in
\citet{Huggett93}, and in \citet{Miao02} for the many state case, but using
monotonicity.}

\begin{lem}
There exists $\widehat{x}\geq\underaccent{\bar}{x}$ such that for all
$x>\widehat{x}$, all $e\in E$, $f(x,e)<x$.
\end{lem}

Given this, we can restrict attention to $[\underaccent{\bar}{x},\hat{x}]$ and
convergence follows from the following argument.\footnote{The details of the
argument are presented in the Appendix.} Starting from $\hat{x}$, there must
be some positive probability of hitting the credit constraint: given
$R^{-1}>\beta$ the only reason for holding assets above
$\underaccent{\bar}{x}$ is the precautionary one, and never hitting
$\underaccent{\bar}{x}$ would imply that assets are excessive, so
$\underaccent{\bar}{x}$ must be hit at some time $T$ with positive
probability. Because $f\left(  x,e\right)  $ is nondecreasing in $x$, starting
at $\underaccent{\bar}{x}$ instead of at $\hat{x}$ but with the same sequence
of endowment shocks, implies that assets at $T$ are also at
$\underaccent{\bar}{x}$. This implies that the mixing condition of Theorem 2
is satisfied at the end of a regenerative cycle suitably defined (by the next
occurrence of the atom after $T$). Thus there exists a unique distribution
$\pi$ on $[\underaccent{\bar}{x},\hat{x}]$ such that the distributions of
$x_{t}$ converge to $\pi$ in the uniform metric for any initial value
$x_{0}\in\lbrack\underaccent{\bar}{x},\hat{x}]$.

\subsection{One-Sector Stochastic Optimal Growth Model}\label{subsec:growth}

The Brock-Mirman \citep{BrockMirman72} one-sector stochastic optimal growth
model has been extended to the case of correlated production shocks by
\citet{DonaldsonMehra} and \citet[][pp.\ 1402--03]{HopenhaynPrescott92}. With
correlated productivity shocks, it is possible to prove uniqueness and
convergence results using the methods of \citet{HopenhaynPrescott92} or
\citet[][Chapter~12]{Stokey-Lucas89} provided the policy function for the
investment is itself monotonic in the productivity shock. Although the
assumption of correlated shocks is very reasonable in this context,
establishing that the policy function is monotone in the productivity shock
is, as pointed out by \citet[][pp.\ 1403]{HopenhaynPrescott92}, difficult
without imposing very restrictive assumptions. The reason is simple. A good
productivity shock today increases current output, which may allow increased
investment. However, because shocks are positively correlated, output will
also be higher on average tomorrow and hence consumption can be too.
Therefore, it may be desirable to increase current consumption by more than
the increase in current output, cutting back on current
investment.\footnote{The sufficient condition given in
\citet{HopenhaynPrescott92} for monotonicity of the policy function in the
productivity shock is
\begin{gather*}
\frac{f_{kz}}{f_{k}\cdot f_{z}}\geq-\frac{u^{\prime\prime}}{u^{\prime}},
\end{gather*}
where $f$ is the production function, depending on capital $k$ and
productivity shock $z$, and $u$ is the utility function. Since the arguments
of the utility function and production function depend on the policy function
themselves, this condition is difficult to check a priori, except in special
cases. One such special case is where the capital and productivity shock are
perfect complements in production, in which case the left-hand-side of the
above inequality becomes infinitely large.} Since our results do not require
monotonicity of the policy function in the driving process, we can establish
convergence to a unique invariant distribution without requiring any extra
restrictive conditions on preferences and productivity beyond those normally
assumed in the stochastic growth model. In addition, of course, we do not
require the productivity shocks to be positively correlated.

We consider a version of the Brock-Mirman one sector stochastic optimal growth
model with full depreciation of capital. Paths for consumption, $c_{t} $, and
capital, $k_{t}$, are chosen to
\[
\max\mathbf{E}\sum\nolimits_{t=0}^{\infty}\beta^{t}u(c_{t})
\]
subject to
\[
f(k_{t},z_{t})\geq c_{t}+k_{t+1},\quad c_{t}\geq0,
\]
where $\beta\in(0,1)$ is the discount factor, $u$ is the utility function, $f$
is the production function and $z_{t}$ is a productivity shock.\footnote{For
this section we use $f$ to denote the production function and $g$ to denote
the policy function.} The productivity shock is drawn from a finite set
$\hat{\mathcal{Z}}\colonequals\{{z}^{1},\ldots,{z}^{n}\}$, $n\geq2$, with
$z_{t}$ governed by a time-homogeneous Markov chain with  transition probabilities
$p(z,z^{+})\colonequals {\mathbf P}(z_{t+1}=z^{+}\mid z_{t}=z)>0$, for all
$z,z^{+}\in\widehat{\mathcal{Z}}$.\footnote{\citet{BrockMirman72} also assume
a finite set of states but assumed the stochastic shock process was i.i.d.} We
make some standard assumptions on preferences and technology. The utility
function $u\mathpunct{:}\mathbb{R}_{+}\rightarrow\mathbb{R}\cup\{-\infty\}$ is
continuous, strictly increasing, and strictly concave on $\mathbb{R}_{+}$ (on
$\mathbb{R}_{++}$ if $u(0)=-\infty$), with $\lim
_{c\mathrel{\nonscript\mkern-1.2mu\mkern1.2mu{\downarrow}}0}u(c)=u(0)$; it is
twice continuously differentiable for $c>0$ and $\lim
_{c\mathrel{\nonscript\mkern-1.2mu\mkern1.2mu{\downarrow}}0}u^{\prime
}(c)=\infty$. The production function $f\mathpunct{:}\mathbb{R}_{+}%
\times\widehat{\mathcal{Z}}\rightarrow\mathbb{R}_{+}$ is continuously
differentiable, strictly increasing and strictly concave in $k$ with
$\lim_{k\mathrel{\nonscript\mkern-1.2mu\mkern1.2mu{\downarrow}}0}%
f_{k}(k,z)=\infty$ for all $z\in\widehat{\mathcal{Z}}$ (where $f_{k}$ denotes
$\partial f(k,z)/\partial k$), $f(0,z)=0$ for all $z\in\widehat{\mathcal{Z}}$, and is such that there exists a $k^{\max}>0$
with $f(k,z)<k$ for all $k>k^{\max}$ and all $z\in\widehat{\mathcal{Z}}$. The
initial conditions are $k_{0}>0$ and $z_{0}\in\widehat{\mathcal{Z}}$ given.

The problem can be set up recursively. Letting $k^{+}$ denote next period's
capital stock and $z^{+}$ next period's shock, the value function satisfies
\begin{gather}
\label{eq:valuefn}v(k,z)=\max_{0\leq k^{+}\leq f(k,z)}\,u(f(k,z)-k^{+}%
)+\beta\mathbf{E}\left[  v(k^{+},z^{+})\mid z\right]
\end{gather}
where $\mathbf{E}$ is expectation over $z^{+}$ given $z$. Let $k_{t+1}%
=g(k_{t},z_{t})$ be the policy function, and
$c(k,z)\colonequals f(k,z)-g(k,z)$. The following is standard
(see, e.g., \citet[][Chapter 10]{Stokey-Lucas89}): $c(k,z)$ and $g(k,z)$ are continuous and increasing in
$k$; moreover $v(k,z)$ is increasing, strictly concave and differentiable in
$k$ for $k>0$.

For $k>0$, the solution to the maximization problem in~\eqref{eq:valuefn} is
interior.\footnote{We have $k^{+}>0$ because the marginal return to saving,
$\beta\mathbf{E}[u^{\prime}(c^{+})f_{k}(k^{+},z^{+})\mid
z]\rightarrow\infty$ as $k^{+}%
\mathrel{\nonscript\mkern-1.2mu\mkern1.2mu{\downarrow}}0$ which therefore
exceeds $u^{\prime}(c_{t})$ for all $k^{+}$ near zero. Similarly, the
condition $\lim_{c\mathrel{\nonscript\mkern-1.2mu\mkern1.2mu{\downarrow}}0}%
u^{\prime}(c)=\infty$ ensures $k^{+}<f(k,z)$.} Thus, the first-order and
envelope conditions are given by:
\begin{align}
u^{\prime}(c(k,z))  &  =\beta\mathbf{E}\left[  v_{k}(g(k,z),z^{+}%
))\mid z\right]  ,\label{FOC}\\
v_{k}(k,z)  &  =u^{\prime}(c(k,z))f_{k}(k,z). \label{envelope}%
\end{align}
Combining~\eqref{FOC} and~\eqref{envelope}, we have:
\begin{align}
v_{k}(k,z)  &  =\beta f_{k}(k,z)\mathbf{E}\left[  v_{k}(g(k,z),z^{+}%
))\mid z\right]  ,\label{vkeuler}\\
u^{\prime}(c(k,z))  &  =\beta\mathbf{E}\left[  u^{\prime
}(c(g(k,z),z^{+}))f_{k}(g(k,z),z^{+})\mid z\right]  . \label{rameuler}%
\end{align}

Define the upper and lower envelopes of the policy functions: $\bar
{g}(k)\colonequals\max_{z}g(k,z)$ and $\lowerbar{g}(k)\colonequals\min
_{z}g(k,z)$. These functions are continuous and increasing and $\bar{g}%
(k)\geq\lowerbar{g}(k)$. Define $k^{\prime\prime}\colonequals\inf
\{k>0\mid\bar{g}(k)\leq k\}$ and $k^{\prime}\colonequals\sup\{0<k\leq
k^{\prime\prime}\mid\lowerbar{g}(k)=k\}$. To establish convergence on a
positive and bounded interval, $[k^{\prime},k^{\prime\prime}]$, we first prove
the following lemma (the proof can be found in the Appendix).

\begin{lem}
\label{lem:2} (i) There is an $\epsilon>0$ such that $\lowerbar{g}(k)>k$ for
all $k\in(0,\epsilon)$; (ii)If $k^{\prime\prime}>k^{\prime}$ then for all
$k>k^{\prime}$, $\lowerbar{g}(k)<k$.
\end{lem}

The first part of the lemma adapts the arguments of \citet{Mitra-Roy12}
(see also \cite{Roy-Itzhak12}) to establish that there is growth with
probability one near zero capital. That is, the capital stock must optimally
increase if capital is close to zero and hence $k^{\prime},k^{\prime\prime}%
>0$. This result is derived from the Inada condition on the marginal product
at zero and the assumption that transition probabilities are positive. The
second part of the lemma ensures that sets above $k^{\prime\prime}$ are
transient, and allows the corollary to be applied in a straightforward manner.
Note that $k^{\prime\prime}$ exists by the continuity of $\bar{g}(k)$ and is
finite because $\bar{g}(k)\leq f(k,z)<k$ for all $k>k^{\max}$ and $z$.

Assume that a degenerate stationary equilibrium at $k>0$ does not exist (see
below for some conditions that guarantee this). Then we can establish
convergence in the uniform metric of the distributions of $k_{t}$ to a unique
non-degenerate stationary distribution $\pi$ with support in $\left[
k^{\prime},k^{\prime\prime}\right]  $ for any initial value $k_{0}>0$: First,
$k^{\prime\prime}>k^{\prime}$ since $k^{\prime\prime}=k^{\prime}$ implies
$\lowerbar{g}(k^{\prime\prime})=\bar{g}(k^{\prime\prime})$ and hence a
degenerate steady state at $k^{\prime\prime}$. Next, for any $k>0$ where
$k\not \in \lbrack k^{\prime},k^{\prime\prime}]$, it follows from the
definitions that all such $k$ are transient and there is a positive
probability sequence such that $k$ will transit to this interval. Next, with
$\bar{g}(k)>k>\lowerbar{g}(k)$ for all $k\in(k^{\prime},k^{\prime\prime})$ by
definition of $k^{\prime}$ and by part (ii) of the lemma, we can show that the
relevant mixing condition of Corollary~1 is satisfied on $[k^{\prime
},k^{\prime\prime}]$. To see this start from $(k^{\prime\prime},z_{0})$;
repeatedly applying $\lowerbar{g}$ yields the strictly decreasing sequence
$(\lowerbar{g}(g(k^{\prime\prime},z_{0})),\lowerbar{g}^{(2)}(g(k^{\prime
\prime},z_{0})),\ldots)$ where $\lowerbar{g}^{(n)}$ denotes the $n$-fold
composition of $\lowerbar{g}$. It follows that $\lim_{T\rightarrow\infty
}\lowerbar{g}^{(T)}(g(k^{\prime\prime},z_{0}))=k^{\prime}$%
.\footnote{Otherwise, if $\lim_{T\rightarrow\infty}\lowerbar{g}%
{(T)}(g(k^{\prime\prime},z_{0}))=\widetilde{k}>k^{\prime}$, then the
continuity of $\lowerbar{g}$ implies $\lowerbar{g}(\widetilde{k}%
)=\widetilde{k}$, which contradicts $\bar{g}(k)>k>\lowerbar{g}(k)$ for all
$k\in(k^{\prime},k^{\prime\prime})$.} Therefore, fixing some $\hat{k}%
\in(k^{\prime},k^{\prime\prime})$, there exists a finite sequence of
productivity shocks $(z_{t})_{t=1}^{T}$ with $z_{t}\in\argmin_{z\in
Z}\{g(\lowerbar{g}^{(t-1)}(g(k^{\prime\prime},z_{0})),z)\}$ such that the
occurrence of $(z_{t})_{t=1}^{T-1}$ implies $k_{T}\leq\hat{k}$. Moreover, the
sequence $(z_{t})_{t=1}^{T}$, with $z_{T}=z_{0}$, has positive probability
since all the transition probabilities are positive. By a symmetric argument,
using $\bar{g}(k)$ and starting from $(k^{\prime},z_{0})$, there exists a
positive probability, finite sequence of productivity shocks $(\widetilde{z}%
_{t})_{t=1}^{\widetilde{T}-1}$ whose occurrence implies $k_{\widetilde{T}}%
\geq\hat{k}$. Corollary~1 can then be applied with $c=\hat{k}$, $N_{1}%
=\widetilde{T}$, and $N_{2}=T$ to establish convergence as claimed.

Under mild conditions degenerate steady states do not exist. Here are two examples:

1. First suppose that preferences are CRRA, $u\left(  c\right)  =c^{1-\alpha
}/\left(  1-\alpha\right)  ,$ $\alpha>1$, and shocks are multiplicative with
$z\in\mathbb{R}_{++},$ $z^{1}<z^{2}<\ldots$ $z^{n}$ say, and $f(k,z)=zh\left(
k\right)  $ and write $h^{\prime}\left(  k\right)  \equiv dh/dk $.

We have $u^{\prime}f_{k}=\left(  zh\left(  k\right)  -k\right)  ^{-\alpha
}zh^{\prime}\left(  k\right)  $, and
\begin{equation}
\partial\left(  u^{\prime}f_{k}\right)  /\partial z=-\frac{h^{\prime}\left(
k\right)  ((\alpha-1)h\left(  k\right)  z+k)}{\left(  zh\left(  k\right)
-k\right)  ^{1+\alpha}}<0 \label{eq:multiplic}%
\end{equation}
by $c=zh\left(  k\right)  -k>0$. Consider (\ref{rameuler}) at a degenerate
steady state $k>0,$ where $k=g(k,z)$ all $z$:%
\begin{equation}
u^{\prime}(f(k,z)-k)=\beta\mathbf{E}\left[  u^{\prime}(f(k,z^{+}%
)-k)f_{k}(k,z^{+})\mid z\right]  . \label{rameuler1}%
\end{equation}
We have $\beta f_{k}(k,z^{n})>1$ since otherwise by $f_{k}(k,z^{n}%
)>f_{k}(k,z^{i}),$ for $i<n$, $\beta f_{k}(k,z^{i})<1$ for $i<n, $ and by
$u^{\prime\prime}<0,$ $u^{\prime}(f(k,z^{1})-k)>$ $u^{\prime}(f(k,z^{i})-k)$
for $i>1$, so we get
\[
u^{\prime}(f(k,z^{1})-k)>\beta f_{k}(k,z^{i})u^{\prime}(f(k,z^{i})-k)
\]
all $i.$ This violates (\ref{rameuler1}) for $z=z^{1}$. But then we get%
\begin{align*}
\beta f_{k}(k,z^{1})u^{\prime}(f(k,z^{1})-k)  &  >\beta f_{k}(k,z^{2}%
)u^{\prime}(f(k,z^{2})-k)>\ldots\\
&  >\beta f_{k}(k,z^{n})u^{\prime}(f(k,z^{n})-k)>u^{\prime}(f(k,z^{n})-k)
\end{align*}
where the final inequality follows by $\beta f_{k}(k,z^{n})>1$ and the rest by
(\ref{eq:multiplic}). This implies the RHS of (\ref{rameuler1}) exceeds the
LHS for $z=z^{n}$, contradicting optimality.

2. Suppose that in addition to any persistent shock to output, there is also a
transitory component to the shock (i.e., such that distribution over future
shocks is unaffected by the transitory component); specifically suppose there
exist $z^{\prime}$, $z^{\prime\prime}\in\widehat{\mathcal{Z}}$, such that
$f(k,z^{\prime})>f(k,z^{\prime\prime})$, $\forall k>0$, and $p(z^{\prime
},z)=p(z^{\prime\prime},z)$ for all $z\in\widehat{\mathcal{Z}}$.

Then the choice of next period's capital stock differs for at least two of the
possible realizations of~$z$: Taking states $z^{\prime}$ and $z^{\prime\prime
}$ as above where $f(k,z^{\prime})>f(k,z^{\prime\prime})$, it follows that
$g(k,z^{\prime})>g(k,z^{\prime\prime})$ for $k>0$, and hence there cannot be a
degenerate steady state.\footnote{Suppose otherwise, that $g(k,z^{\prime})\leq
g(k,z^{\prime\prime})$. It follows from $f(k,z^{\prime})>f(k,z^{\prime\prime
})$ that $c(k,z^{\prime})>c(k,z^{\prime\prime})$. This leads to a
contradiction of~(\ref{FOC}). The LHS of~(\ref{FOC}) is strictly lower at
$z^{\prime}$ than at $z^{\prime\prime}$ by the concavity of the utility
function. Conversely, by the concavity of $v$, $v_{k}(g(k,z^{\prime}%
),z^{+})\geq v_{k}(g(k,z^{\prime\prime}),z^{+})$ at each $z^{+}$, meaning the
the RHS of~(\ref{FOC}) is no lower at at $z^{\prime}$ than at $z^{\prime
\prime}$, by the assumption that $p(z^{\prime},z^{+})=p(z^{\prime\prime}%
,z^{+})$ all $z^{+}$.}

\subsection{Limited Commitment Risk-Sharing Model}\label{subsec:risksharing}

In this section we consider the inter-temporal risk-sharing model with limited
commitment. \citet{Kocherlakota96} \citep[see also, for
example,][]{Thomas-Worrall88,Alvarez-Jermann00,Alvarez-Jermann01,Ligon-Thomas-Worrall02}
provides a convergence result for the long-run distribution of risk-sharing
transfers when shocks to income are finite and i.i.d.
His model has two, infinitely-lived, risk averse agents with per-period,
strictly concave and differentiable utility function $u\mathpunct{:}\mathbb{R}%
_{+}\rightarrow\mathbb{R}$ defined over consumption, and a common discount
factor $\beta$. Agent~1 has a random endowment $y_{t}>0$ at date~$t=0,1,\ldots
,$ and agent~2 has a random endowment $Y-y_{t}>0$ where $Y>0$ is a constant
aggregate income. The endowment shock is drawn from a finite set
$\mathcal{Y}\colonequals\{{y}^{1},\ldots,{y}^{n}\}$, $n\geq2$, with $y_{t}$
governed by a Markov chain with stationary transition probabilities
$p(y,y^{+})\colonequals {\mathbf P}(y_{t+1}=y^{+}\mid y_{t}=y)>0$, for all
$y,y^{+}\in\mathcal{Y}$. There is no credit market but agents can transfer
income between themselves at any date. Although \citet{Kocherlakota96} assumes
the endowment shocks are i.i.d., we will show that this convergence result is
easily extended to the case where $y_{t}$ is a Markov chain. It is important to consider this non-i.i.d.\ case. The inter-temporal risk-sharing model with limited commitment has been most frequently applied to village economies where income is predominantly derived from farming. Farm incomes are often found to be to be positively serially correlated.\footnote{For example, \citet{Bold-Broer16} use the ICRISAT data of three Indian villages and find estimated autocorrelation coefficients of around $0.61-0.77$.}

To study optimal risk sharing in this limited commitment context, let $h^{t}=(y_{0},y_{1},\ldots,y_{t})$ denote the history of income realizations,
agents choose a sequence of history-dependent transfers $X_{t}(h^{t})$ from
agent~1 to agent~2 subject to $-Y+y_{t}\leq X_{t}(h^{t})\leq y_{t}$ for each
$h^{t}$ and the self-enforcing constraints that neither agent prefers autarky
from that point on after any history over the agreed transfer plan. In
particular, the self-enforcing constraints for the two agents are
\begin{align*}
u(y_{t}-X_{t}(h^{t}))  &  +\mathbf{E}[\sum_{s=1}^{\infty}\beta
^{s}u(y_{t+s}-X_{t}(h^{t+s}))]\\
&  \geq u(y_{t})+\mathbf{E}[\sum_{s=1}^{\infty}\beta^{s}u(y_{t+s}))],\\
u(Y-y_{t}+X_{t}(h^{t}))  &  +\mathbf{E}[\sum_{s=1}^{\infty}\beta
^{s}u(Y-y_{t+s}+X_{t}(h^{t+s}))]\\
&  \geq u(Y-y_{t})+\mathbf{E}[\sum_{s=1}^{\infty}\beta^{s}u(Y-y_{t+s}))],
\end{align*}
for each date $t$ and $h^{t}$. An \emph{efficient risk-sharing arrangement\/}
will solve (for some feasible $U^{0}$):
\[
\max_{\{X_{t}\}}\mathbf{E}[\sum_{s=0}^{\infty}\beta^{s}u(y_{s}-X_{s}%
(h^{s}))]\quad\mbox{s.t.}\quad\mathbf{E}[\sum_{s=0}^{\infty}\beta
^{s}u(Y-y_{s}+X_{s}(h^{s}))]\geq U^{0}.
\]
and subject to the self-enforcing constraints. It is well known \citep[see,
e.g.,][]{Ligon-Thomas-Worrall02} that the solution at each date has the
following property: For each realization $y$, there is a time-invariant
interval $I_{y}=[\lowerbar{c}_{y},\overline{c}_{y}]$, $\lowerbar{c}_{y}%
\leq\overline{c}_{y}$, such that
\[
c_{t+1}\left(  h^{t+1}\right)  \colonequals y_{t+1}-X_{t+1}(h^{t+1})=%
\begin{cases}
\overline{c}_{y_{t+1}} & \text{if $c_{t}(h^{t})>\overline{c}_{y_{t+1}}$}\\
c_{t}(h^{t}) & \text{if $c_{t}(h^{t})\in I_{y_{t+1}}$}\\
\lowerbar{c}_{y_{t+1}} & \text{if $c_{t}(h^{t})<\lowerbar{c}_{y_{t+1}}$}%
\end{cases}
,
\]
and there is a one-to-one correspondence between feasible $U^{0}$ and
agent~1's initial consumption $c_{0}(h^{0})\in\lbrack\lowerbar{c}_{y_{0}%
},\overline{c}_{y_{0}}]$. We can write this in the
form~\eqref{eq:generalsetting} as $c_{t+1}=f(c_{t},{z}_{t})$ where ${z
}_{t}\colonequals y_{t+1}$, and where
\[
f(c,z)=%
\begin{cases}
\overline{c}_{z} & \text{if $c>\overline{c}_{z}$}\\
c & \text{if $c\in I_{z}$}\\
\lowerbar{c}_{z} & \text{if $c<\lowerbar{c}_{z}$}%
\end{cases}.
\]
The function $f(c,z)$ is clearly monotone increasing in $c$. If $f(c, z)$ were also increasing in $z$ and the Markov process determining $y$ were persistent, then the approach of \citet{HopenhaynPrescott92} could be used. However, even if the Markov process determining $y$ is monotone, the dependence of $f(c,z)$ on $z$ is not easy to derive from the primitives of the model because $\overline{c}_{z}$ and $\lowerbar{c}_{z}$ are computed as part of the optimal solution. They are determined by the slopes of the value functions of the dynamic programming problem and depend on all elements of the problem.\footnote{One case where it is known that monotonicity in $z$ can be established is if one of the agents is risk-neutral. This is the case studied by \citet{Thomas-Worrall88}. We are unaware of any results on the monotonicity in $z$ in other more general cases. Fortunately, our method does not rely on establishing such monotonicity properties and can also be applied if the income process were negatively autocorrelated.}

The first-best risk-sharing allocation is sustainable for some $U^{0}$ if and only if $\cap_{z
}I_{z}\not =\emptyset$. \citet{Kocherlakota96} shows (his Proposition~4.2)
that if shocks are i.i.d.\ and if the first-best is not sustainable then the
distribution of transfers converges weakly to the same non-degenerate
distribution for all $U^{0}$. We now show how to easily extend this result to the case where shocks follow a
Markov chain without making assumptions on the monotonicity of $f(c,z)$ in $z$. Define $c_{\min}\colonequals\min_{z}\overline{c}_{z}$,
$c_{\max}\colonequals\max_{z}\lowerbar{c}_{z}.$ If the first-best is not
sustainable, $\cap_{z}I_{z}=\emptyset$, then $c_{\min}<c_{\max}$. If
$c_{t}\in\lbrack c_{\min},c_{\max}]$, $c_{t+1}=f(c_{t},{z}_{t})\in\lbrack
c_{\min},c_{\max}]$ for all ${z}_{t}$. Define $c\colonequals(c_{\min
}+c_{\max})/2$. Using the notation of Corollary~1 (where $[c_{\min},c_{\max}]$
replaces $[\alpha,\beta]$), let $N_{1}=N_{2}=2$, $z_{1,1}\in\arg\max_{z
}\lowerbar{c}_{z}$, $z_{1,2}\in\arg\min_{z}\overline{c}_{z}$. For any
$z_{0}$, all the assumptions of the corollary are satisfied. Thus, there
exists a unique distribution $\pi$ such that the distributions of $c_{t}$
converge to $\pi$ in the uniform metric for any initial value $c_{0}\in\lbrack
c_{\min},c_{\max}]$. Clearly, $c_{t}\in\lbrack\lowerbar{c}_{z_{0}%
},\overline{c}_{z_{0}}]\cup\lbrack c_{\min},c_{\max}]$ all $t$, and
$[\lowerbar{c}_{z_{0}},\overline{c}_{z_{0}}]\backslash\lbrack c_{\min
},c_{\max}]$ is transient.

If the first-best is sustainable, then the mixing condition is not satisfied.
In that case it can be seen immediately that there is monotone convergence to
a first-best allocation (the limit allocation is dependent on the initial condition).

\section{Conclusion \label{sec:conclusion}}

In this paper we have established convergence results that can be used in a
range of models whose dynamics can be represented by a stochastic recursion,
and which satisfy two main conditions; first, for a given value of the
exogenous driving process, the future value of the endogenous variable is
monotone increasing
in its current value; secondly, the driving process is regenerative. The
latter includes as a special case irreducible finite Markov chains. These two
conditions, along with a standard mixing condition, guarantee weak convergence
to a unique stationary distribution.

This extends the existing results on convergence of monotone Markov processes
that assume the driving process is i.i.d.\ or assume that the driving process
is itself a monotone Markov process \citep{HopenhaynPrescott92}. This
extension is important because most economic models take the driving process
for the underlying shocks to be exogenous and therefore it is useful to have
results for a broader class of stochastic driving processes. Moreover, we do
not require that the stochastic recursion is monotone in the second argument.
This is particularly useful when the stochastic recursion is derived as a
policy function of a dynamic programming problem because establishing
monotonicity in the shock process might require extra restrictions on
preferences or technology.

We have applied our approach to three workhorse models in macroeconomics
extending our understanding of stability in these models. Our Theorem~2 and
its corollary can also be readily used to establish convergence to a unique
stationary distribution for any monotone stochastic recursion in a
regenerative environment where the appropriate mixing condition is satisfied.

\section*{Appendix}

\renewcommand{\theequation}{A.\arabic{equation}} \setcounter{equation}{0}

\begin{arxiv}
\subsection*{Proof of Theorem 1.}
\begin{proof} The metric space of probability distributions on $[a,b]$
with metric $d$ is complete. Due to monotonicity, it is sufficient
to show that
\begin{equation*}\label{conv2}
d(F_t^{(a)}, F_t^{(b)})\to 0
\end{equation*}
exponentially fast.
Then \eqref{conv1} will follow.

Let
$
\varepsilon = \min (\varepsilon_1,\varepsilon_2).
$
Denote by $A$ and $B$ the events
$$
A= \{X_N^{(b)}\le c\} \quad \mbox{and}
\quad
B=\{X_N^{(a)}\ge c \}.
$$
Note that both events are defined by $(\xi_0,\ldots,\xi_{N-1})$,
i.e., belong to the sigma-algebra generated by these
random variables.

{The proof is by induction.} For any $r\ge c$ and for any two probability measures
$\mu$ and $\nu$ on $[a,b]$ with $\mu (x) \equiv \mu [a,x]
\ge \nu (x) \equiv \nu [a,x]$, for all $x$, we may couple initial values of~$4$ trajectories of the Markov chain
$\{X_t^{(b)}\}, \{X_t^{(\nu )}\}, \{X_t^{(\mu )}\}$, $\{X_t^{(a}\}$ in such a way that
$$
1=X_0^{(b)} \ge X_0^{(\nu )} \ge X_0^{(\mu )} \ge X_0^{(a}=0 \quad \mbox{a.s.}
$$
Then
$$
X_t^{(b)} \ge X_t^{(\nu)} \ge X_t^{(\mu)} \ge X_t^{(a)} \quad \text{a.s.\ for any} \quad t,
$$
and we have
\begin{eqnarray*}
0\le F_N^{(\mu )}(r)-F_N^{(\nu )}(r) &=&
{\mathbf P} (X_N^{(\mu )}\le r, A)
+
{\mathbf P} (X_N^{(\mu )}\le r, \overline{A})\\
&-&
{\mathbf P} (X_N^{(\nu )}\le r, A)
-
{\mathbf P} (X_N^{(\nu )}\le r, \overline{A})\\
&=& {\mathbf P} (A)
+
{\mathbf P} (X_N^{(\mu )}\le r, \overline{A})\\
&-&
{\mathbf P} (A)
-
{\mathbf P} (X_N^{(\nu )}\le r, \overline{A})\\
&= &
\int_{\overline{A}} (\mu (S^{(N)}(\overline{v},r))-
\nu (S^{(N)}(\overline{v},r))) {\mathbf P} ((\xi_0,\ldots
\xi_{N-1})\in d \ \overline{v})\\
&\le &
\sup_x (\mu (x) - \nu (x)) \cdot {\mathbf P} (
\overline{A}) \\
&\le &
(1-\varepsilon ) \sup_x (\mu (x) - \nu (x)) .
\end{eqnarray*}

Similarly, for $r < c$, we may use event $B$ to
conclude again that
$$
0\le F_N^{(\mu )}(r)-F_N^{(\nu )}(r)
\le
(1-\varepsilon ) \sup_x (\mu (x) - \nu (x)) .
$$

Therefore,
$$
\sup_r (F_N^{(\mu )}(r)-F_N^{(\nu )}(r))
\le
(1-\varepsilon ) \sup_x (\mu (x) - \nu (x)).
$$

Now, {by induction}, we obtain
$$
0\le F_{kN}^{(a)}(r)-F_{kN}^{(b)}(r) \le (1-\varepsilon )^k
$$
for all $r$.

Indeed, for~$k=1$ the inequality follows from the above. Assume that it holds for~$k\le K-1$. Then
\begin{eqnarray*}
0 & \le & F_{KN}^{(a)}(r)-F_{KN}^{(b)}(r) = {\mathbf P}
\left(X_{KN}^{(a)} \le r \right) - {\mathbf P} \left(X_{KN}^{(b)} \le r \right)
\cr & = & {\mathbf P} \left(X_{N}^{(X_{(K-1)N}^{(a)})} \le r \right) - {\mathbf P}
\left(X_{N}^{(X_{(K-1)N}^{(b)})} \le r \right)
\cr & \le & (1-\varepsilon) \sup_r \left(F_{(K-1)N}^{(a)}(r)-F_{(K-1)N}^{(b)}(r)
\right) \le (1-\varepsilon )^K,
\end{eqnarray*}
which finishes the proof of the induction argument, and the result now follows.
\end{proof}
There is also a straightforward generalization of the above result to the case of a space with a partial order.

\begin{cors}[To Theorem~1] \label{thm:partial} Let $\mathcal S$ be an arbitrary space with a partial order $\le$ such that there exist the least element $s_0 \in \mathcal S$ and the greatest element $s_1 \in \mathcal S$. Assume that a time-homogeneous Markov chain $X_n$ is represented as a stochastic
recursion \eqref{eq:srs} with an i.i.d.\ driving sequence $\{ \xi_n\}$, where function $f:\mathcal{S}\times
{\cal V}\to \mathcal{S}$ is monotone increasing in the first argument (with respect to the partial order $\le$).\\
Assume also that there exist a positive number $\varepsilon$, an integer $N\ge 1$ and sets $\mathcal{C}_u \subset \mathcal{S}$ and $\mathcal{C}_l \subset \mathcal{S}$ such that
\begin{itemize}
\item for every element $s \in \mathcal{S}$, there either exists an element $c \in \mathcal{C}_u$ such that $s \ge c$, or there exists an element $c \in \mathcal{C}_l$ such that $s \le c$;
\item for every $c \in \mathcal{C}_u$,
$$
{\mathbf P}^{(s_1)} (X_N\le c ) > \varepsilon,
$$
and for every $c \in \mathcal{C}_l$,
$$
{\mathbf P}^{(s_0)} (X_N\ge c) > \varepsilon.
$$
\end{itemize}
\end{cors}
The proof follows the lines of proof of the previous theorem.
\end{arxiv}

\subsection*{Proof of Theorem~2}

\begin{proof}
Define a sequence~$Y_{n+1}=X_{T_{n}}$ for all~$n\geq
0$. This sequence is clearly a Markov chain and can
therefore be represented in the form~
\[
Y_{n+1}=g(Y_{n},\eta_{n})
\]
with an i.i.d.\ driving sequence
\[
\eta_{n}=\left(  \tau_{n},Z_{T_{n-1}},..,Z_{T_{n}-1}\right)
\]
and where the function $g$ is defined by
\[
g(Y_{n},\eta_{n})=f^{(\tau_{n})}\left(  Y_{n},Z_{T_{n-1}},..,Z_{T_{n}%
-1}\right)  .
\]
In addition, this recursion is again monotone in the first argument, due to
the monotonicity of function $f$. The assumptions of the theorem imply that
there exists~$c\in\lbrack a,b]$ such that
\[
\mathbf{P}(Y_{1}\leq c|Y_{0}=b)={\mathbf{P}}\left(  \widetilde{X}_{T_{1}%
-T_{0}}^{(b)}\leq c\right)  >0
\]
and
\[
\mathbf{P}(Y_{1}\geq c|Y_{0}=a)={\mathbf{P}}\left(  \widetilde{X}_{T_{1}%
-T_{0}}^{(a)}\geq c\right)  >0.
\]
Hence, the assumptions of Theorem~\ref{BhMa2} are satisfied with the same~$c$
and with~$N=1$. This implies the first statement of the theorem.

We prove the second statement now.
For any $t$, let $\nu(t)$ be such that $T_{\nu(t)}\leq t<T_{\nu(t)+1}$,
so $t$ belongs to the $(\nu(t)+1)$st cycle.
Let $\psi_{t}=(t-T_{\nu(t)},Z_{T_{\nu(t)}},\ldots,Z_{t-1})$ and denote
$\psi_{t,1}=t-T_{\nu(t)}$ and $\psi_{t,2}=(Z_{T_{\nu(t)}},\ldots,Z_{t-1})$, so $\psi_{t}%
=(\psi_{t,1},\psi_{t,2})$.
For any fixed $k>0$ and for all sufficiently large $t$, consider a vector of random
vectors\footnote{Note that each such vector is the sequence of
shocks, together with lengths, of each of the previous $k+1$
completed cycles plus shocks and length of the incomplete cycle up
to time $t$.} $(\eta_{\nu(t)-k},\eta_{\nu(t)-k+1},\ldots,\eta
_{\nu(t)},\psi_{t})$.
By the classical result on regenerative processes (see, e.g., \cite{Asmussen}), for any fixed $k>0$ and as $t$ tends to infinity, the joint distribution of
random vectors $(\eta_{\nu(t)-k},\eta_{\nu(t)-k+1},\ldots,\eta
_{\nu(t)},\psi_{t})$ converges in the total variation norm to the limiting
distribution of a vector of random vectors, say, $(\eta^{-k},\ldots,\eta^{0},\psi^{0})$:
\[
\delta_{t,k}\colonequals\sup_{B}|{\mathbf{P}}((\eta_{\nu(t)-k},\ldots,\eta_{\nu
(t)},\psi_{t})\in B)-{\mathbf{P}}((\eta^{-k},\ldots,\eta^{0},\psi^{0})\in
B)|\rightarrow0
\]
as $t \to \infty$.

Random vectors $\eta^{-k}, \ldots,
\eta^{0},\psi^0$ are mutually independent, and
each of the $\eta^{-j}$, $j=0,\ldots,k$, has the distribution of
the \textquotedblleft typical cycle\textquotedblright, while random vector
$\psi^{0}$ represents the left half of the \textquotedblleft integrated
cycle\textquotedblright, and its first coordinate $\psi_{1}^{0}$ has the
integrated tail distribution ${\mathbf{P}}(\psi_{1}^{0}=l)
=\frac{1}{{\mathbf{E}}\tau_{1}}{\mathbf{P}}(\tau_{1}>l)$, for $l=0,1,\ldots$.
In what follows, we use representation $\psi^{0}=(\psi_{1}^{0},\psi_{2}^{0})$
where $\psi_{2}^{0}$ is the rest of vector $\psi^{0}$ (and, in particular, it
is $l$-dimensional if $\psi_{1}^{0}=l$).

Further, a more advanced construction is possible: one can introduce (on a common probability space with
all earlier defined random variables) a stationary sequence
$(\eta^{-k}_t,\ldots,\eta^{0}_t,\psi^{0}_t)$ such that
%
\[
{\mathbf{P}}(A_{t,k})=\delta_{t,k},%
\]
where we denote
$$A_{t,k} = \{(\eta_{\nu(t)-k},\ldots,\eta_{\nu(t)},\psi_{t})\neq(\eta
^{-k}_t,\ldots,\eta^{0}_t,\psi^{0}_t)\}
$$
(see, e.g., Chapter 1 in \cite{Lindvall}).\footnote{In applied probability,
such a construction is frequently called a ``successful coupling of transient and stationary sequences''.}
In the rest of the proof, we assume such a coupling
to be given.

Introduce $\widehat{Y}^k_{t,0}=\widetilde{Y}^k_{t,0}=Y_{\nu(t)-k}$
and
\[
\widehat{Y}^k_{t,m+1}=g(\widehat{Y}^k_{t,m},\eta_{\nu(t)-k+m}),\quad m=0,..,k-1
\]
and\textbf{\ }
\[
\widetilde{Y}^k_{t,m+1}=g(\widetilde{Y}^k_{t,m},\eta_t^{-k+m}),\quad
m=0,..,k-1.
\]
Consider now
\begin{multline*}
\mathbf{P} \left(\widehat{Y}^k_{t,k} \neq \widetilde{Y}^k_{t,k}\right)
=\mathbf{P} \biggl(g^{(k)}(Y_{\nu(t)-k},(\eta_{\nu(t)-k},\ldots
,\eta_{\nu(t)})) \neq (g^{(k)}(Y_{\nu(t)-k},(\eta_t
^{-k},\ldots,\eta_t^{0}))\biggr)
\\ \leq  \mathbf{P}(\eta_{\nu(t)-k},\ldots,\eta_{\nu(t)})\neq(\eta
^{-k}_t,\ldots,\eta^{0}_t)) \leq \mathbf{P} (A_{t,k}) = \delta_{t,k},
\end{multline*}
with the obvious notation for $g^{(k)}$.

Introduce also $Z_0^k$ as a random variable with distribution $\pi$ and independent of $(\eta^{-k}, \ldots,
\eta^{0},\psi^0)$ and let
$$
Z^k_{t,m+1}=g(Z^k_{t,m},\eta_t^{-k+m}),\quad
m=0,..,k-1.
$$
Note that $Z^k_{t,m}$ has distribution $\pi$ for all $t$, $k$ and $m$.

Due to the first statement of the theorem, we have that, as $k\rightarrow\infty$, the distribution of the random variable
$\widehat{Y}^k_{t,k}$ converges to distribution $\pi$ in the total variation
norm and, hence, in the uniform metric. We can therefore, for any
$\varepsilon>0$, choose $k$ such that, for any $r$,
\[
\left|{\mathbf{P}}(\widehat{Y}^k_{t,k}\leq r)-{\mathbf{P}}(Z^k_{t,k}\leq r)\right|\leq\varepsilon
\]
and then
\begin{multline*}
\left|{\mathbf{P}}\left(\widetilde{Y}^k_{t,k}\leq r \right)-{\mathbf{P}}\left(Z^k_{t,k} \leq r \right)\right|
\\ = \left|{\mathbf{P}}\left(\widetilde{Y}^k_{t,k}\leq r, \widehat{Y}^k_{t,k} = \widetilde{Y}^k_{t,k}\right) + {\mathbf{P}}\left(\widetilde{Y}^k_{t,k}\leq r, \widehat{Y}^k_{t,k} \neq \widetilde{Y}^k_{t,k}\right) -{\mathbf{P}}\left(Z^k_{t,k}\leq r\right)\right| \\
= \left|{\mathbf{P}}\left(\widehat{Y}^k_{t,k}\leq r, \widehat{Y}^k_{t,k} = \widetilde{Y}^k_{t,k} \right) + {\mathbf{P}}\left(\widetilde{Y}^k_{t,k}\leq r, \widehat{Y}^k_{t,k} \neq \widetilde{Y}^k_{t,k} \right) -{\mathbf{P}}\left(Z^k_{t,k}\leq r \right)\right|
\\ = \left|{\mathbf{P}}\left(\widehat{Y}^k_{t,k}\leq r \right) - {\mathbf{P}}\left(\widehat{Y}^k_{t,k}\leq r, \widehat{Y}^k_{t,k} \neq \widetilde{Y}^k_{t,k}\right) + {\mathbf{P}}\left(\widetilde{Y}^k_{t,k}\leq r, \widehat{Y}^k_{t,k} \neq \widetilde{Y}^k_{t,k} \right) -{\mathbf{P}}\left(Z^k_{t,k}\leq r \right)\right|
\\ \leq \left|{\mathbf{P}}\left(\widehat{Y}^k_{t,k}\leq r \right)-{\mathbf{P}}\left(Z^k_{t,k}\leq r \right)\right| + 2\mathbf{P}\left(\widehat{Y}^k_{t,k} \neq \widetilde{Y}^k_{t,k}\right)
\leq 2 \delta_{t,k}+\varepsilon.
\end{multline*}

Now, using similar arguments, for any $r$ and any $l=0,1,\ldots$,
\begin{multline*}
\left|{\mathbf{P}}\left(X_{t}\leq r,t-T_{\nu(t)}=l\right)-{\mathbf{P}}\left(f^{(l)}(\widetilde{X}_{0}^{(\pi)},\psi_{0,2}^{0})\leq r,\psi_{0,1}^{0}=l\right)\right|
\\ = \left|{\mathbf{P}}\left(X_{t}\leq r,t-T_{\nu(t)}=l\right)-{\mathbf{P}}\left(f^{(l)}(Z^k_{t,k},\psi_{t,2}^{0})\leq r,\psi_{t,1}^{0}=l\right)\right|
\\ =\left|{\mathbf{P}} \left(f^{(l)}(\widehat{Y}^k_{t,k},\psi_{t,2})\leq r,t-T_{\nu(t)}=l\right)-{\mathbf{P}%
}\left(f^{(l)}(Z^k_{t,k},\psi_{t,2}^{0})\leq r,\psi_{t,1}^{0}%
=l\right)\right|
\\ \leq \left|{\mathbf{P}} \left(f^{(l)}(\widetilde{Y}^k_{t,k},\psi^0_{t,2})\leq r,\psi^0_{t,1}=l\right)-{\mathbf{P}%
}\left(f^{(l)}(Z^k_{t,k},\psi_{t,2}^{0})\leq r,\psi_{t,1}^{0}%
=l\right)\right| + 2\delta_{t,k}.
\end{multline*}
Note that for any $l=1,2,\ldots$ and any $v\in\mathcal{Z}^{l}$, the set $S_{l}(v,r)=\{x\ :\ f^{(l)}(x,v)\leq r\}$ is
an interval of the form $[a,b)$ or $[a,b]$, for some $b$.
Therefore,
\begin{multline*}
 \left|{\mathbf{P}} \left(f^{(l)}(\widetilde{Y}^k_{t,k},\psi^0_{t,2})\leq r,\psi^0_{t,1}=l\right)-{\mathbf{P}%
}\left(f^{(l)}(Z^k_{t,k},\psi_{t,2}^{0})\leq r,\psi_{t,1}^{0}%
=l\right)\right|
\\ = {\mathbf{P}}(\psi_{t,1}^{0}=l) \int\left|{\mathbf{P}}\left(f^{(l)}(\widetilde{Y}^k_{t,k},v)\leq r\right)-{\mathbf{P}}\left(f^{(l)}(Z^k_{t,k},v)\leq
r\right)\right|{\mathbf{P}}(\psi^{0}_{t,2}\in dv\ |\ \psi_{t,1}^{0}=l)
\\
=\mathbf{P}(\psi_{t,1}^{0}=l) \int\left|{\mathbf{P}}\left(\widetilde{Y}^k_{t,k}\in S_{l}(v,r)\right)-{\mathbf{P}}\left(Z^k_{t,k}\in S_{l}(v,r)\right)\right|{\mathbf{P}}(\psi^{0}_{t,2}\in dv\ |\ \psi_{t,1}^{0}=l)\\
 \leq\mathbf{P}(\psi_{t,1}^{0}=l) \sup_{w}|{\mathbf{P}}(\widetilde{Y}_{t,k}\leq w)-{\mathbf{P}%
}(Z^k_{t,k}\leq w)|.
\end{multline*}
Thus,
\[
\left|{\mathbf{P}}\left(X_{t}\leq r,t-T_{\nu(t)}=l\right)-{\mathbf{P}}\left(f^{(l)}(Z^k_{t,k},\psi_{t,2}^{0})\leq r,\psi_{t,1}^{0}=l\right)\right|
\]
tends to $0$, and the same holds for any finite sum in $l$. From the general theory of renewal processes (see, e.g., \cite{Asmussen}) it is known that the family of random variables $\{t-T_{\nu(t)}\}$ is tight. Recall that this means that
$$\Delta (l):= \sup_t {\mathbf P} (t-T_{\nu (t)} >l )\to 0$$ as $l\to\infty$. Therefore, for any
$\varepsilon >0$, one can choose $L>0$ such that
$\Delta (L) + {\mathbf  P} (\psi^0_{t,1} >L) \le \varepsilon$ for any $t$. Then
\begin{multline*}
\left|{\mathbf P} \left(X_t\le r\right) - {\mathbf P} \left(f^{(l)}(Z^k_{t,k},\psi_{t,2}^{0})\leq r\right)\right|
\\ \le \sum_{l=0}^L \left|{\mathbf P} \left(X_t\le r,t-T_{\nu(t)}=l\right) - {\mathbf P} \left(f^{(l)}(Z^k_{t,k},\psi_{t,2}^{0})\leq r\psi_{t,1}^{0}=l\right) \right| +\varepsilon \to \varepsilon,
\end{multline*}
as $t\to\infty$. Letting $\varepsilon$ go to zero, we arrive at the second statement of the theorem.

The proof of the convergence of $(X_t,Z_t)$ follows the exact same lines, with an extra event added in each of the probabilities. We omit this derivation as the formulae are rather cumbersome but do not contain any additional technical difficulties.
\end{proof}

\subsection*{Proof of Corollary~1}

\begin{proof}
We have to show that Corollary \ref{Cor1} follows from Theorem
\ref{thm:general}. For that, we have to define a typical (say, first)
regenerative cycle and show that all the conditions of Theorem
\ref{thm:general} hold. Assume that $Z_{0}=z_{0}$, so $T_{0}=0$. Let
$T_{1}=\tau_{1}=\min\{t>0:Z_{t}=z_{0}\}$, then the aperiodicity means that
$G.C.D.\{t:{\mathbf{P}}(T_{1}=t)>0\}=1$. Let $T_{n}=\sum_{1}^{n}\tau_{j}$
where $\tau_{j}$ are i.i.d.\ copies of $\tau_{1}$. Let the conditions of the
Corollary hold, and $k_{i}$ be the number of occurrences of $z_{0}$ in
the sequence $z_{j,i}$, for $i=1,2$. Let $L$ be the \emph{least common
multiple\/} of $k_{1}$ and $k_{2}$,
\[
L=\min\{l\ :\ l/k_{1}\ \ \mbox{and}\ \ l/k_{2}\ \ \mbox{are integers}\}.
\]
Let $\alpha$ be a random variable that takes values $0$ and $1$ with equal
probabilities and does not depend on any of the processes defined in the
model. Then define a regenerative cycle as follows: $\widehat{T}_{0}=0$ and
\[
\widehat{T}_{1}=T_{1}\alpha+T_{L}(1-\alpha).
\]
That is, we suppose that our regenerative cycle is either a single cycle or a
sum of $L$ cycles, with equal probabilities. Then all the conditions of
Theorem \ref{thm:general} hold (with $\widehat{T}_{i}$ in place of $T_{i}$).
Indeed, condition \eqref{eq:finitemean} follows since it holds for $\tau_{1}$,
and since $\widehat{T}_{1}$ is not bigger than $T_{L}$, the sum of $L$ copies
of $\tau_{1}$. Condition \eqref{eq:aperiodicity} follows because the set of
all $t$ such that ${\mathbf{P}}(\widehat{T}_{1}=t)>0$ includes the set of all
$t$ such that ${\mathbf{P}}(\tau_{1}=t)>0$ and, therefore,
\[
G.C.D.\{t\ :\ {\mathbf{P}}(\widehat{T}_{1}=t)>0\}\leq
G.C.D.\{t\ :\ {\mathbf{P}}(\tau_{1}=t)>0\},
\]
so, {given aperiodicity, both greatest common divisors are equal to $1$.}
Finally, $\varepsilon_{1}$ in \eqref{eq:top} is not smaller than $\frac{1}%
{2}p_{1}\delta_{1}>0$ and, similarly, $\varepsilon_{2}$ in \eqref{eq:bottom}
is not smaller than $\frac{1}{2}p_{2}\delta_{2}>0$.
\end{proof}

\subsection*{Proof of Lemma 1.}

\begin{proof}
Define
\[
\hat{c}:=(\bar{e}-\lowerbar{e})/(1-(\beta R)^{1/\gamma}).
\]
Clearly, there exists $\hat{x}$ such that for $x>\hat{x}$, $c\left(  x,e\right)
>\hat{c}$ for all $e\in E$.\footnote{For $a\geq(R/(R-1))(1-\beta
)^{1/(1-\gamma)}\hat{c}$ setting $c_{t}=((R-1)/R)a+e_{t}$ all $t$ (so that
$a_{t}$ is constant at $a$) yields a discounted utility greater than $\hat
{c}^{1-\gamma}/\left(  1-\gamma\right)  ;$ this is higher utility than any
policy with $c(a,e_{t})\leq\hat{c}$ which yields at most $\hat{c}^{1-\gamma
}/\left(  1-\gamma\right)  $.} Suppose that, at some $(x,e)$ with $x>\hat{x}$,
$f(x,e)\geq x.$ We demonstrate a contradiction. Since $f(x,e)>\lowerbar{x},$
the Euler condition holds with equality:%
\begin{equation}
u^{\prime}(c(x,e))=\beta R\mathbf{E}\left[  u^{\prime}(c(f(x,e),e^{+}%
))\mid e\right]  .\label{eqn:eulerr3}%
\end{equation}
(\ref{eqn:eulerr3}) implies that there exists $X^{+}\in\mathcal{E}$ with
$Q\left(  e,X^{+}\right)  >0$ and such that $u^{\prime}(c(x,e))\leq\beta
Ru^{\prime}(c(f(x,e),e^{+}))$ for $e^{+}\in X^{+}$. Thus for $e^{+}\in X^{+},$%
\[
c(f(x,e),e^{+})^{-\gamma}\geq(\beta R)^{-1}c(x,e)^{-\gamma},
\]
so%
\begin{equation}
c(f(x,e),e^{+})\leq(\beta R)^{1/\gamma}c(x,e).\label{eqn:88}%
\end{equation}
By $c(x,e)>$ $\hat{c},$ we have from (\ref{eqn:88}):
\begin{align}
c(x,e)-c(f(x,e),e^{+}) &  \geq(1-(\beta R)^{1/\gamma})c(x,e)\\
&  >(\bar{e}-\lowerbar{e}).\label{eqn:10}%
\end{align}
Then%
\begin{align}
f(f(x,e),e^{+}) &  =R(f(x,e)+e^{+}-c(f(x,e),e^{+}))\nonumber\\
&  >R(x+e^{+}+(\bar{e}-\lowerbar{e})-c(x,e))\nonumber\\
&  \geq R(x+e-c(x,e))\nonumber\\
&  =f(x,e),\label{eqn:12}%
\end{align}
where the first line follows from the budget constraint, the second from
$f(x,e)\geq x$ and (\ref{eqn:10}), the third from $e^{+}\geq\lowerbar{e}$ and
$\bar{e}\geq e$, and the last from the budget constraint. Defining $x_{t}=x$,
$x_{t+1}=f(x,e)$, $x_{t+2}=f(f(x,e),e^{+})$ etc., we can express
(\ref{eqn:12}) as $x_{t+2}\geq x_{t+1}.$ Repeating the logic of (\ref{eqn:88})
and (\ref{eqn:12}), starting at $(f(x,e),e^{+})$ for some $e^{+}\in X^{+}$
there is some $X^{++}\in\mathcal{E}$ with $Q\left(  e^{+},X^{++}\right)  >0$
at $t+2$ such that $x_{t+3}>x_{t+2}$ and such that
\[
c_{t+2}\leq(\beta R)^{2/\gamma}c(x,e),
\]
etc. Iterating, we get eventually that $c_{t+n}<\hat{c}$ while $x_{t+n}%
>\hat{x}$, a contradiction.
\end{proof}

\subsection{Details of convergence result in Section \ref{subsec:huggett}}

Assume the initial state (at time $t=0$) $e_{0}$ is the atom of the chain and
suppose that $x_{0}=\hat{x}$. Maximum consumption at $t=0$ if all resources
are used is $\overline{c}:=\hat{x}-\underaccent{\bar}{x}/R+e_{0}.$ We can also
define a lower bound on consumption at any date by $\underaccent{\bar}{c}>0$%
.\footnote{Since $c_{t}=((R-1)/R)\underaccent{\bar}{x}+\bar{e}$ $>0$ is always
feasible (by assumption on $\underaccent{\bar}{x}$), this implies a lower
bound to utility; consumption below some positive level implies a discounted
utility below this bound.} Choose $T\in\{1,2,\ldots\}$ and $\xi>0$ so that
\begin{equation}
\overline{c}^{-\gamma}>(\beta R)^{T}\underaccent{\bar}{c}^{-\gamma}%
+\xi.\label{eqn:er}%
\end{equation}
(This implies that the agent would like, if feasible, to transfer a small
amount of consumption forward from $T$ periods ahead.) Suppose that $\Pr
[x_{t}=\underaccent{\bar}{x}]=0$ for all $t>0$. We shall establish a
contradiction. For any $\Delta>0,$ we can choose $\varepsilon>0$ so that
${\mathbf{P}}(x_{t}<\underaccent{\bar}{x}+\varepsilon
\ \mbox{for at least one}\ t\in\{1,\ldots,T\})<\Delta$ (using the right
continuity of the distribution of $x_{t}$, say $F_{t}$, with the hypothesis
that $F_{t}(\underaccent{\bar}{x})=0$ for $t\leq T,$ choose $\varepsilon$ so
that at each $t,$ $F_{t}(\underaccent{\bar}{x}+\varepsilon)<\Delta/T$ ). It
follows that an increase in consumption at $t=0$ of amount $\lambda
\leq\varepsilon R^{-T}$ can be financed by a reduction at date $T$ (but
otherwise keeping time $t$ consumption $c_{t},$ $1\leq t<T,$ at its original
level),\thinspace\ i.e., $x_{t}\geq\underaccent{\bar}{x}$ for $1\leq t\leq T,$
with probability at least $(1-\Delta)$ since assets at $t$ would be
$x_{t}-R^{t}\lambda\geq x_{t}-\varepsilon\geq\underaccent{\bar}{x}$ for $t\leq
T$. To a first-order, the discounted utility cost is at most $\lambda(\beta
R)^{T}\underaccent{\bar}{c}^{-\gamma}$. Otherwise reduce $c_{t}$ to restore
assets to $x_{t}$ when the credit constraint first binds at $t<T,$ at a cost
of at most $\lambda\underaccent{\bar}{c}^{-\gamma}$. The change in utility to
a first order is thus at least%
\[
\lambda\left(  \overline{c}^{-\gamma}-\Delta\underaccent{\bar}{c}^{-\gamma
}-(1-\Delta)(\beta R)^{T}\underaccent{\bar}{c}^{-\gamma}\right)  .
\]
Choosing $\Delta$ small so that $\Delta\underaccent{\bar}{c}^{-\gamma}<\xi,$
the term multiplying $\lambda$ is positive, using (\ref{eqn:er}), and so for
$\lambda$ small (so that $\lambda\leq\varepsilon R^{-T}$ is satisfied, where
$\varepsilon$ depends on $\Delta,$ and that higher order terms are small
enough) there is a profitable deviation. Hence
$\hat{t}:=\min
\{ t>0 :
{\mathbf P}
(x_t=\lowerbar{x})>0\} <\infty$.

Next, define times as $T_{0}=\min\left\{  t\geq0:e_{t}=e_{0}\right\}  $, and
for $j=0,1,\ldots,$
\[
T_{j+1}=\min\left\{  t\geq T_{j}+\hat{t}:e_{t}=e_{0}\right\}  .
\]
Thus the sequence $\left\{  e_{t}\right\}  $ with associated times $\left\{
T_{n}\right\}  $ is regenerative and satisfies
\eqref{eq:finitemean}-\eqref{eq:aperiodicity}. Moreover consider the process
$\tilde{x}_{t}^{(\alpha)}$starting at $t=0$ from $\alpha$ and satisfying
recursion $\tilde{x}_{t+1}^{(\alpha)}=f(\tilde{x}_{t}^{(\alpha)},e_{T_{0}+t}%
)$. By the above, $\{\tilde{x}_{\hat{t}}^{(\hat{x})}$ $=$
$\underaccent{\bar}{x}\}$ has positive probability. Now consider $\tilde
{x}_{T_{1}-T_{0}}^{(\underaccent{\bar}{x})}$. By the monotonicity of $f$ in
its first argument, $\tilde{x}_{t}^{(\underaccent{\bar}{x})}\leq\tilde{x}%
_{t}^{(\hat{x})}$ for all $t$, and if $\tilde{x}_{\hat{t}}^{(\hat{x}%
)}=\underaccent{\bar}{x}$, then also $\tilde{x}_{\hat{t}}%
^{(\underaccent{\bar}{x})}=\underaccent{\bar}{x}$, so $\tilde{x}%
_{t}^{(\underaccent{\bar}{x})}=\tilde{x}_{t}^{(\hat{x})}$ for $t\geq\hat{t}$
and conditional on hitting $\underaccent{\bar}{x}$ at $\hat{t}$, $\tilde
{x}_{\tilde{t}}^{(\hat{x})}$ and $\tilde{x}_{\tilde{t}}%
^{(\underaccent{\bar}{x})}$ coincide at each $t\geq\hat{t}$. Consequently $c$
exists satisfying conditions \eqref{eq:top}-\eqref{eq:bottom} of Theorem 2 and
the result follows.

\subsection*{Proof of Lemma 2.}

\begin{proof}
(i) Suppose that $\lowerbar{g}(k)\leq k$. Consider $z_{\tau}$,
$k_{\tau}$ such that $k_{\tau+1}=\lowerbar{g}(k_{\tau})=g(k_{\tau
},z_{\tau})$, that is consider the shock that depletes capital at the maximum
rate. Let $\phi(k)=\inf_{z}f_{k}(k,z)$ be the greatest lower bound on the
marginal product as a function of $k$. We have
\begin{align*}
u^{\prime}(c(k_{\tau},z_{\tau}))  &  =\beta\mathbf{E}\left[
u^{\prime}(c(g(k_{\tau},z_{\tau}),z_{\tau+1}))f_{k}(g(k_{\tau},z_{\tau
}),z_{\tau+1})\mid z_{\tau}\right]%
\\
&  =\beta\mathbf{E}\left[  u^{\prime}(c(\lowerbar{g}(k_{\tau}),z_{\tau+1}))f_{k}%
(\lowerbar{g}(k_{\tau}),z_{\tau+1})\mid z_{\tau}\right]\\
&  \geq\beta\phi(k_{\tau})\mathbf{E}\left[  u^{\prime
}(c(\lowerbar{g}(k_{\tau}),z_{\tau+1}))\mid z_{\tau}\right] \\
&  \geq\beta\phi(k_{\tau})\mathbf{E}\left[  u^{\prime}(c(k_{\tau
},z_{\tau+1}))\mid z_{\tau}\right].%
\end{align*}
The first equality follows by equation~\eqref{rameuler}. The second equality follows by the definition $k_{\tau+1}=\lowerbar{g}(k_{\tau})$. The inequality in the third line follows by $\lowerbar{g}(k_{\tau})\leq k_{\tau}$ and the definition of
$\phi$, and the final inequality follows by $\lowerbar{g}(k_{\tau})\leq k_{\tau}$ and $c(k,z)$ increasing in $k$.
Since $z_{\tau}=z^{i}$ for some state~$i$, the above inequality (deleting
terms for states~$j\not =i$) implies
\[
u^{\prime}(c(k_{\tau},z^{i}))\geq\beta\phi(k_{\tau})u^{\prime}(c(k_{\tau
},z^{i}))p(z^{i},z^{i}).
\]
Since $u^{\prime}(c)>0$, it therefore follows that $1\geq\beta\phi(k_{\tau
})p(z^{i},z^{i})$. Let $\rho\colonequals\min_{i}p(z^{i},z^{i})$. By assumption
$\rho>0$, and therefore $\phi(k_{\tau})\leq1/(\beta\rho)$ for all $k_{\tau}$.
Since $\beta>0$ and $\rho>0$, equivalently, $k_{\tau}\geq\phi^{-1}%
(1/(\beta\rho))$. Letting $\epsilon=\phi^{-1}(1/(\beta\rho))$, the assumption
that $f_{k}(k,z)\rightarrow\infty$ for all $z$ as
$k\mathrel{\nonscript\mkern-1.2mu\mkern1.2mu{\downarrow}}0$ implies
$\epsilon>0$ and hence we have $k_{\tau}\geq\epsilon$ for all $\tau$. Thus, it
follows that $g(k_{\tau},z_{\tau})>k_{\tau}$ for all $k_{\tau}<\epsilon$ and
all $z_{\tau}$.
(ii) Suppose not. Then by continuity of $\lowerbar{g}(k),$ $\exists$
$\hat{k}>k^{\prime}$ such that $\lowerbar{g}(\hat{k})=\hat{k}$. By
definition of $k^{\prime}$ and assumption that $k^{\prime}<k^{\prime\prime}$,
$\lowerbar{g}(k^{\prime\prime})<\bar{g}(k^{\prime\prime})$
($=k^{\prime\prime}$) and so $\hat{k}>k^{\prime\prime}$.
Consider any $(k,z)\in$ $[k^{\prime},k^{\prime\prime}]\times
\widehat{\mathcal{Z}}$. Then $g(k,z)\in\lbrack k^{\prime},k^{\prime\prime}]$
since $g(k,z)\geq\lowerbar{g}(k)\geq\lowerbar{g}(k^{\prime
})=k^{\prime}$ where the second inequality follows from $g$ increasing in $k$,
and the equality from the definition of $k^{\prime}$; likewise, $g(k,z)\leq
\bar{g}(k)\leq\bar{g}(k^{\prime\prime})=k^{\prime\prime} $ where the second
inequality follows from $g$ increasing in $k$, and the equality from the
definition of $k^{\prime\prime}$.
Similarly, for $k\geq\hat{k}$, $g(k,z)\geq\hat{k}$, $\forall z\in
\widehat{\mathcal{Z}}$, since $g(k,z)\geq\lowerbar{g}(k)\geq
\lowerbar{g}(\hat{k})=\hat{k}$.
We shall demonstrate a contradiction. Take any $(\bar{k},z_{0})\in$
$(k^{\prime},k^{\prime\prime})\times\widehat{\mathcal{Z}}$, and define
recursively
\begin{align}
\bar{k}_{0}  &  =\bar{k};\nonumber\\
\bar{k}_{\tau}  &  =g(\bar{k}_{\tau-1},z_{\tau-1})\qquad\tau=1,\ldots,N.
\label{defkbar}%
\end{align}
Iterating (\ref{vkeuler}) $N>0$ times:%
\begin{equation}
v_{k}(\bar{k},z)=\mathbf{E}\left[  \beta^{N}\Pi_{\tau=0}^{N-1}f_{k}(\bar
{k}_{\tau},z_{\tau})v_{k}(\bar{k}_{N},z_{N})\mid z_{0}\right]  .
\label{iterate}%
\end{equation}
Likewise, for any $\tilde{k}\geq\hat{k}$, defining $\tilde{k}_{\tau}$
(analogously to $\bar{k}_{\tau}$) starting from $( \tilde{k},z_{0})$,%
\begin{equation}
v_{k}(\tilde{k},z)=\mathbf{E}\left[  \beta^{N}\Pi_{\tau=0}^{N-1}f_{k}%
(\tilde{k}_{\tau},z_{\tau})v_{k}(\tilde{k}_{N},z_{N})\mid z_{0}\right]  .
\label{iterate1}%
\end{equation}
By $\bar{k}_{\tau}\in[ k^{\prime},k^{\prime\prime}] $, $\tilde{k}_{\tau}%
\geq\hat{k}$, $k^{\prime\prime}<\hat{k}$, and the the strict concavity of $f$
and $v$ in $k$ :
\begin{equation}
f_{k}(\bar{k}_{\tau},z_{\tau})\geq\gamma f_{k}(\tilde{k}_{\tau},z_{\tau
})\qquad a.s., \label{rank1}%
\end{equation}
for some $\gamma>1$, and
\begin{equation}
v_{k}(\bar{k}_{N},z_{N})>v_{k}(\tilde{k}_{N},z_{N})\qquad a.s. \label{rank2}%
\end{equation}
Thus, from (\ref{iterate}), (\ref{iterate1}), (\ref{rank1}) and (\ref{rank2}):%
\[
v_{k}(\bar{k},z)>\gamma^{N}v_{k}(\tilde{k};z).
\]
Since $v_{k}(\bar{k};z)<\infty$ by $\bar{k}>0$, $\gamma>1$, $v_{k}(\tilde
{k};z)>0$, letting $N\rightarrow\infty$ yields a contradiction.
\end{proof}

\noindent\textbf{Acknowledgement:} We thank two referees and the Associate Editor for their helpful and constructive comments. The research of S.~Foss was partially
supported by EPSRC grant EP/I017054/1 and by RSF grant 17-11-01173. The research of S.~Shneer was
supported by EPSRC grant EP/L026767/1. The research of J.~Thomas and
T.~Worrall was supported by ESRC grant ES/L009633/1.



\begin{thebibliography}{43}
\providecommand{\natexlab}[1]{#1}
\providecommand{\url}[1]{\texttt{#1}}
\expandafter\ifx\csname urlstyle\endcsname\relax
  \providecommand{\doi}[1]{doi: #1}\else
  \providecommand{\doi}{doi: \begingroup \urlstyle{rm}\Url}\fi

\bibitem[A\c{c}ikg\"{o}z(2018)]{Acikgoz16}
\"{O}mer A\c{c}ikg\"{o}z.
\newblock On the existence and uniqueness of stationary equilibrium in {B}ewley
  economies with production.
\newblock 173\penalty0 (Supplement {C}):\penalty0 18--25, January 2018.
\newblock \doi{10.1016/j.jet.2017.10.006}.

\bibitem[Acemo\u{g}lu and Jensen(2015)]{Acemoglu-Jensen15}
Daron Acemo\u{g}lu and Martin~Kaae Jensen.
\newblock Robust comparative statics in large dynamic economies.
\newblock \emph{Journal of Political Economy}, 123\penalty0 (3):\penalty0
  587--640, June 2015.
\newblock \doi{10.1086/680685}.

\bibitem[Aiyagari(1994)]{Aiyagari94}
S.~Rao Aiyagari.
\newblock Uninsured idiosyncratic risk and aggregate saving.
\newblock \emph{The Quarterly Journal of Economics}, 109\penalty0 (3):\penalty0
  659--84, August 1994.
\newblock \doi{10.2307/2118417}.

\bibitem[Alvarez and Jermann(2000)]{Alvarez-Jermann00}
Fernando Alvarez and Urban~J.\ Jermann.
\newblock Efficiency, equilibrium, and asset pricing with the risk of default.
\newblock \emph{Econometrica}, 68\penalty0 (4):\penalty0 775--798, July 2000.
\newblock \doi{10.1111/1468-0262.00137}.

\bibitem[Alvarez and Jermann(2001)]{Alvarez-Jermann01}
Fernando Alvarez and Urban~J.\ Jermann.
\newblock Quantitative asset pricing implications of endogenous solvency
  constraints.
\newblock \emph{Review of Financial Studies}, 14\penalty0 (4):\penalty0
  1117--1151, Winter 2001.
\newblock \doi{10.1093/rfs/14.4.1117}.

\bibitem[Asmussen(2003)]{Asmussen}
S{\o}ren Asmussen.
\newblock \emph{Applied Probability and Queues}.
\newblock Springer-Verlag, {N}ew {Y}ork, 2003.
\newblock ISBN 0387002111.

\bibitem[Bewley(1987)]{Bewley86}
Truman~F. Bewley.
\newblock Stationary monetary equilibrium with a continuum of independently
  fluctuating consumers.
\newblock In Werner Hildenbrand and Andreu Mas-Colell, editors,
  \emph{Contributions to Mathematical Economics in Honor of Gerard Debreu},
  pages 79--102. North-Holland, {A}msterdam, 1987.
\newblock ISBN 0444879242.

\bibitem[Bhattacharya and Majumdar(1999)]{BhMa2}
Rabi Bhattacharya and Mukul Majumdar.
\newblock On a theorem of {D}ubins and {F}reedman.
\newblock \emph{Journal of Theoretical Probability}, 12\penalty0 (4):\penalty0
  1067--1087, October 1999.
\newblock \doi{10.1023/A:1021601421920}.

\bibitem[Bhattacharya and Majumdar(2007)]{BhMa}
Rabi Bhattacharya and Mukul Majumdar.
\newblock \emph{Random Dynamical Systems: Theory and Applications}.
\newblock Cambridge University Press, {C}ambridge, 2007.
\newblock ISBN 0521532723.

\bibitem[Bold and Broer(2016)]{Bold-Broer16}
Tessa Bold and Tobias Broer.
\newblock Risk-sharing in village economies revisited.
\newblock {CEPR} {D}iscussion {P}apers 11143,   March 2016.

\bibitem[Borovkov and Foss(1992)]{BoFo}
Aleksandr~A. Borovkov and Sergey~G. Foss.
\newblock Stochastically recursive sequences and their generalizations.
\newblock \emph{Siberian Advances in Mathematics}, 2\penalty0 (1):\penalty0
  16--81, January 1992.

\bibitem[Brandt(1985)]{Brandt85}
Andreas Brandt.
\newblock On stationary waiting times and limiting behaviour of queues with
  many servers {I}: The general ${G}/{G}/{m}/\infty$ case.
\newblock \emph{Elektronische Informationsverarbeitung und Kybernetik},
  21\penalty0 (1/2):\penalty0 47--64, January 1985.

\bibitem[Brock and Mirman(1972)]{BrockMirman72}
William Brock and Leonard Mirman.
\newblock Optimal economic growth and uncertainty: The discounted case.
\newblock \emph{Journal of Economic Theory}, 4\penalty0 (3):\penalty0 479--513,
  June 1972.
\newblock \doi{10.1016/0022-0531(72)90135-4}.

\bibitem[Corcoran and Tweedie(2001)]{TwCor}
Jem~N. Corcoran and Richard~L. Tweedie.
\newblock Perfect sampling of ergodic {H}arris chains.
\newblock \emph{Annals of Applied Probability}, 11\penalty0 (2):\penalty0
  438--451, May 2001.
\newblock \doi{10.1.1.56.1341}.

\bibitem[Donaldson and Mehra(1983)]{DonaldsonMehra}
John~B. Donaldson and Rajnish Mehra.
\newblock Stochastic growth with correlated production shocks.
\newblock \emph{Journal of Economic Theory}, 29\penalty0 (2):\penalty0
  282--312, April 1983.
\newblock \doi{10.1016/0022-0531(83)90049-2}.

\bibitem[Dubins and Freedman(1966)]{Dubins}
Lester~E. Dubins and David~A. Freedman.
\newblock Invariant probabilities for certain {M}arkov processes.
\newblock \emph{Annals of Mathematical Statistics}, 37\penalty0 (4):\penalty0
  837--848, August 1966.
\newblock \doi{10.1214/aoms/1177699364}.

\bibitem[Foss(1983)]{Foss83}
Sergey~G. Foss.
\newblock On ergodicity conditions in multi-server queues.
\newblock \emph{Siberian Mathematical Journal}, 24\penalty0 (6):\penalty0
  168--175, November--December 1983.
\newblock \doi{10.1007/BF00970322}.

\bibitem[Foss and Tweedie(1998)]{FTw}
Sergey~G. Foss and Richard~L. Tweedie.
\newblock Perfect simulation and backward coupling.
\newblock \emph{Communications in Statistics. Stochastic Models}, 14\penalty0
  (1-2):\penalty0 187--203, March 1998.
\newblock \doi{10.1080/15326349808807466}.

\begin{jet}
\bibitem[Foss et~al.(2016)Foss, Shneer, Thomas, and Worrall]{FSTW}
Sergey~G. Foss, Vsevolod Shneer, Jonathan~P. Thomas, and Tim Worrall.
\newblock Stochastic stability of monotone economies in regenerative
  environments.
\newblock arXiv:1409.2286v6 [econ.EM], November 2017.
\end{jet}

\bibitem[Heathcote et~al.(2009)Heathcote, Storesletten, and
  Violante]{Heathcoteetal}
Jonathan Heathcote, Kjetil Storesletten, and Giovanni~Luca Violante.
\newblock Quantitative macroeconomics with heterogeneous households.
\newblock \emph{Annual Review of Economics}, 1\penalty0 (1):\penalty0 319--354,
  September 2009.
\newblock \doi{10.1146/annurev.economics.050708.142922}.

\bibitem[Hopenhayn(1992)]{Hopenhayn92}
Hugo~A Hopenhayn.
\newblock Entry, exit, and firm dynamics in long run equilibrium.
\newblock \emph{Econometrica}, 60\penalty0 (5):\penalty0 1127--50, September
  1992.
\newblock \doi{10.2307/2951541}.

\bibitem[Hopenhayn and Prescott(1992)]{HopenhaynPrescott92}
Hugo~A. Hopenhayn and Edward~C. Prescott.
\newblock Stochastic monotonicity and stationary distributions for dynamic
  economies.
\newblock \emph{Econometrica}, 60\penalty0 (6):\penalty0 1387--1406, November
  1992.
\newblock \doi{10.2307/2951526}.

\bibitem[Horst(2001)]{Horst}
Ulrich Horst.
\newblock The stochastic equation {$Y_{t+1} = A_t Y_t +B_t$} with
  non-stationary coefficients.
\newblock \emph{Journal of Applied Probability}, 38\penalty0 (1):\penalty0
  80--94, 2001.

\bibitem[Huggett(1993)]{Huggett93}
Mark Huggett.
\newblock The risk-free rate in heterogeneous-agent incomplete-insurance
  economies.
\newblock \emph{Journal of Economic Dynamics and Control}, 17\penalty0
  (5--6):\penalty0 953--969, September--November 1993.
\newblock \doi{10.1016/0165-1889(93)90024-M}.

\bibitem[\.{I}mrohoro\u{g}lu(1992)]{Imrohoroglu92}
Ay\c{s}e \.{I}mrohoro\u{g}lu.
\newblock The welfare cost of inflation under imperfect insurance.
\newblock \emph{Journal of Economic Dynamics and Control}, 16\penalty0
  (1):\penalty0 79--91, January 1992.
\newblock \doi{10.1016/0165-1889(92)90006-Z}.

\bibitem[Kamihigashi and Stachurski(2014)]{KaSta}
Takashi Kamihigashi and John Stachurski.
\newblock Stochastic stability in monotone economies.
\newblock \emph{Theoretical Economics}, 9\penalty0 (2):\penalty0 383--407, May
  2014.
\newblock \doi{10.3982/TE1367}.

\bibitem[Kamihigashi and Stachurski(2015)]{Kamihigashi15}
Takashi Kamihigashi and John Stachurski.
\newblock Perfect simulation for models of industry dynamics.
\newblock \emph{Journal of Mathematical Economics}, 56:\penalty0 9--14, January
  2015.
\newblock \doi{10.1016/j.jmateco.2014.11.004}.

\bibitem[Kaymak and Poschke(2016)]{Kaymak2015}
Bari\c{s} Kaymak and Markus Poschke.
\newblock The evolution of wealth inequality over half a century: The role of
  taxes, transfers and technology.
\newblock \emph{Journal of Monetary Economics}, 77:\penalty0 1--25, February
  2016.
\newblock \doi{10.1016/j.jmoneco.2015.10.004}.

\bibitem[Kifer(1986)]{Kifer}
Yuri Kifer.
\newblock \emph{Ergodic Theory of Random Transformations}.
\newblock Birkh\"{a}user, {B}oston, 1986.
\newblock ISBN 1468491776.

\bibitem[Kocherlakota(1996)]{Kocherlakota96}
Narayana~R. Kocherlakota.
\newblock Implications of efficient risk sharing without commitment.
\newblock \emph{Review of Economic Studies}, 63\penalty0 (4):\penalty0
  595--610, October 1996.
\newblock \doi{10.2307/2297795}.

\bibitem[Ligon et~al.(2002)Ligon, Thomas, and Worrall]{Ligon-Thomas-Worrall02}
Ethan Ligon, Jonathan~P. Thomas, and Tim Worrall.
\newblock Informal insurance arrangements with limited commitment: Theory and
  evidence from village economies.
\newblock \emph{Review of Economic Studies}, 69\penalty0 (1):\penalty0
  209--244, January 2002.
\newblock \doi{10.1111/1467-937X.00204}.

\bibitem[Lindvall(2002)]{Lindvall}
Torgny Lindvall.
\newblock \emph{Lectures on the Coupling Method}.
\newblock Dover Publications, {N}ew {Y}ork, 2002.
\newblock ISBN 0486421457.

\bibitem[Loynes(1962)]{Loynes}
Robert~M. Loynes.
\newblock The stability of a queue with non-independent inter-arrival and
  service times.
\newblock \emph{Mathematical Proceedings of the Cambridge Philosophical
  Society}, 58\penalty0 (3):\penalty0 497--520, July 1962.
\newblock \doi{10.1017/S0305004100036781}.

\bibitem[Meyn and Tweedie(2009)]{MTw}
Sean Meyn and Richard~L.\ Tweedie.
\newblock \emph{Markov Chains and Stochastic Stability}.
\newblock {Cambridge University Press}, {C}ambridge, second edition, 2009.
\newblock ISBN 0521731828.

\bibitem[Miao(2002)]{Miao02}
Jianjun Miao.
\newblock Stationary equilibria of economies with a continuum of heterogeneous
  consumers.
\newblock Mimeo, March 2002.

\bibitem[Miao(2014)]{Miao14}
Jianjun Miao.
\newblock \emph{Economic Dynamics in Discrete Time}.
\newblock {MIT} Press, {B}oston, 2014.
\newblock ISBN 9780262027618.

\bibitem[Mitra and Roy(2012)]{Mitra-Roy12}
Tapan Mitra and Santanu Roy.
\newblock Sustained positive consumption in a model of stochastic growth: The
  role of risk aversion.
\newblock \emph{Journal of Economic Theory}, 147\penalty0 (2):\penalty0
  850--880, 2012.
\newblock \doi{10.1016/j.jet.2010.12.010}.

\bibitem[Roy and Zilcha(2012)]{Roy-Itzhak12}
Santanu Roy and Itzhak Zilcha.
\newblock Stochastic growth with short-run prediction of shocks.
\newblock \emph{Economic Theory}, 51\penalty0 (3):\penalty0 539--580, 2012.
\newblock \doi{10.1007/s00199-011-0607-9}.

\bibitem[Stachurski(2009)]{Stachurski}
John Stachurski.
\newblock \emph{Economic Dynamics: Theory and Computation}.
\newblock {MIT} Press, {B}oston, 2009.
\newblock ISBN 0262012774.

\bibitem[Stokey et~al.(1989)Stokey, Lucas~Jr., and Prescott]{Stokey-Lucas89}
Nancy~L. Stokey, Robert~E. Lucas~Jr., and Edward~C. Prescott.
\newblock \emph{Recursive Methods in Economic Dynamics}.
\newblock Harvard University Press, {C}ambridge, {M}ass., 1989.
\newblock ISBN 0674750969.

\bibitem[Szeidl(2013)]{Sz}
Adam Szeidl.
\newblock Stable invariant distributions in buffer-stock saving and stochastic
  growth models.
\newblock Mimeo, April 2013.

\bibitem[Thomas and Worrall(1988)]{Thomas-Worrall88}
Jonathan~P. Thomas and Tim Worrall.
\newblock Self-enforcing wage contracts.
\newblock \emph{Review of Economic Studies}, 55\penalty0 (4):\penalty0
  541--554, October 1988.
\newblock \doi{10.2307/2297404}.

\bibitem[Zhu(2017)]{Zhu13}
Shenghao Zhu.
\newblock Existence of equilibrium in an incomplete market model with
  endogenous labor supply.
\newblock Mimeo, January 2017.

\end{thebibliography}


\end{document}